\numberwithin{equation}{section}
\theoremstyle{plain}
\newtheorem{theorem}{Theorem}[section]
\newtheorem{lemma}[theorem]{Lemma}
\newtheorem{corollary}[theorem]{Corollary}
\theoremstyle{definition}
\newtheorem{definition}[theorem]{Definition}
\theoremstyle{remark}
\newtheorem{case[theorem]}{Case}
\newcommand{\R}{\mathbb{R}}
\newcommand{\E}{\mathbb{E}}
\newcommand{\ds}{\displaystyle}
\begin{document}

\title{On Erd\H os Chains in the plane} 

\author{J. Passant}

\date{\today}

\email{jpassant@ur.rochester.edu}

\address{Department of Mathematics, University of Rochester, Rochester, NY 14627}


\begin{abstract}Let $P$ be a finite point set in $\mathbb{R}^2$ with the set of distance $n$-chains defined as
$$ \Delta_n(P)=\{(|p_1-p_2|,|p_2-p_3|,\ldots,|p_n-p_{n+1}|):p_i \in P\}.$$
We show that for $2\leq n=O_{|P|}(1)$ we have
$$|\Delta_n(P)|\gtrsim \frac{|P|^{n}}{\log^{\frac{13}{2}(n-1)}|P|}.$$
Our argument uses the energy construction of Elekes and a general version of Rudnev's rich-line bound implicit in \cite{R19}, which allows one to iterate very efficiently on intersecting nested subsets of Guth-Katz lines.

\noindent Let $G$ is a simple connected graph on $m=O(1)$ vertices with $m\geq 2$. Define the graph-distance set $\Delta_G(P)$ as 
$$ \Delta_G(P) = \{ (|p_{i}-p_{j}|)_{\{i,j\}\in E(G)} : p_i,p_j \in P\}.$$

Combining with results of Guth and Katz \cite{GK15} and Rudnev \cite{R19} with the above, if $G$ has a Hamiltonian path we have
$$  |\Delta_G(P)| \gtrsim \frac{|P|^{m-1}}{\text{polylog}|P|}. $$
\end{abstract} 

\maketitle


\section{Introduction}

\vskip.125in 

\vskip.125in 

\noindent Given a set $P$ in $\mathbb{R}^d$, we define the distance set of $P$ as
$$\Delta_d(P) = \{|x - y| : x, y \in P\} \subseteq \mathbb{R}^d.$$
The famous distance conjecture of Erd\H os \cite{E46} asked what is the minimal number of distinct distances determined by a finite point set $P$ in $\mathbb{R}^d$?
This was resolved in the plane by Guth and Katz \cite{GK15} building upon the work of Elekes and Sharir \cite{ES11}. This followed decades of work by, among others, Moser \cite{M52}, Chung \cite{C84}, Chung-Szemer\' edi-Trotter \cite{CST92}, Sz\'ekely \cite{S97}, Solymosi-T\' oth \cite{ST01}, Tardos \cite{T03}. See the book of Garibaldi, Iosevich and Senger \cite{GIS11} for a more complete introduction.

Recently progress has been made by Iosevich and the author \cite{IP18} and Rudnev \cite{R19} on a variant of Erd\H os' conjecture that asks about larger configurations. Suppose one has a graph $G$ with $k$ vertices. What is the minimum number of distinct-distance realisations when one takes the vertices of $G$ from a set of $n$ elements and considers distances only along edges of the graph? 

When the graph concerned is the complete graph on two vertices we see that this is exactly the distinct-distance problem of Erd\H os, when the graph is a triangle the question asks for distinct congruence classes of triangles. To give the precise formulation, for a finite point set $P$, we define the graphical-distance set of $P$ as

$$ \Delta_G(P) = \{ (|p_{i}-p_{j}|)_{\{i,j\}\in E(G)} : p_i,p_j \in P\}.$$

Then one asks for a lower bound on the size of $\Delta_G(P)$, as $|P|$ grows. Using the integer lattice as an upper bound one conjectures for a connected graph $G$ on $n+1$ vertices that for all $\varepsilon>0$ one can find a constant $C_\varepsilon$ such that

\begin{equation}\label{Eq: GraphDistConj}
	|\Delta_G(P)| \geq C_\varepsilon |P|^{n-\varepsilon}. 
\end{equation}

Configurations in the Euclidean setting were studied by Fürstenberg, Katznelson and Weiss \cite{FKW90} in the context of positive upper density. They expanded distance results in positive density sets due to Bourgain \cite{B86} and Falconer-Marstrand \cite{FM86} to show that one can find triangles. This result was then greatly expanded by Ziegler \cite{Z06} who showed one could find any simplex. Lyall and Magyar \cite{LM20} recently provided a sharp extension of the result of Bourgain. Bennett, Iosevich and Taylor \cite{BIT16} building on earlier work of Chan, \L aba and Pramanik \cite{CLP16} answered a version building on the Falconer conjecture (see \cite{F85}), showing that if one takes a set of sufficiently high Hausdorff dimension then if the graph is a chain of any length the graphical-distance set contains an open set. Using the improvements to the Falconer threshold in the plane due to Guth, Iosevich, Ou and Wang \cite{GIOW19}, Ou and Taylor \cite{OT20} recently improved the threshold for chains.

Similar results have also been obtained over finite fields in work of Iosevich and Parshall \cite{IP19} and Iosevich, Jardine and McDonald \cite{IJM21}.

In the Erd\H os setting J\'anos Pach asked how many similar triangles are defined by a set of points in the plane. Solymosi and Tardos \cite{ST07} found the tight bound that a point set $P$ determines at most $O(|P|^4\log(|P|))$ similar triangles pairs\footnote{Essentially the similar triangle energy in the terminology of this paper.} using bounds on $k$-rich complex transformations. One can quickly adapt this to bound the set of similar triangles by $\Omega(|P|^2/\log(|P|))$. This bound was reproved by Rudnev \cite{R12}, who also improved the bound on classes of congruent triangles to $\Omega(|P|^2)$ using the framework established by Elekes-Sharir-Guth-Katz \cite{ES11, GK15}.


Iosevich and the author provided the first class of graphs for which (\ref{Eq: GraphDistConj}) holds. They established that if $G$ is a minimally infinitesimally rigid connected graph on $n+1$ vertices then $ |\Delta_G(P)| \gtrsim |P|^{n}$. Where the lack of logarithm in the bound is expected as rigid graphs contain many loops.

\begin{figure}[h]
	\centering
	\begin{minipage}{0.45\textwidth}
		\centering
		\begin{tikzpicture}
			\draw[thick] (1,2)--(3,4)--(6.5,4.5)--(5,2)--(1,2);
			\draw[thick] (3,4)--(5,2);
			\foreach \Point in {(1,2), (3,4),(6.5,4.5),(5,2)}{
				\node at \Point {\textbullet};
			}
		\end{tikzpicture}\\
		\caption{Rigid Graph}\label{Fig: RigidGraph}
	\end{minipage}\hfill
	\begin{minipage}{0.45\textwidth}
		\centering
		\begin{tikzpicture}
			\draw[thick] (1,2)--(3,4)--(6.5,4.5)--(5,2)--(1,2);
			\foreach \Point in {(1,2), (3,4),(6.5,4.5),(5,2)}{
				\node at \Point {\textbullet};
			}
		\end{tikzpicture}\\
		\caption{Non-Rigid Graph}\label{Fig: NonRigidGraph}
	\end{minipage}
\end{figure}

Iosevich and the author \cite{IP18} also note that (\ref{Eq: GraphDistConj}) quickly follows from the pinned Erd\H os conjecture. One can see this by noting that if $T_G$ is a spanning tree for $G$ then  $|\Delta_G(P)| \geq |\Delta_{T_G}(P)|$ and thus to prove (\ref{Eq: GraphDistConj}) in generality it suffices to prove the conjecture for trees. Using the pinned version of the Erd\H os distance result gives many rich pins and one can use these to construct a sufficient number of trees to verify (\ref{Eq: GraphDistConj}).

\begin{figure}[h]
	\centering
	\begin{tikzpicture}
		\draw[thick] (1,2)--(3,4)--(5,3);
		\foreach \Point in {(1,2), (3,4), (5,3)}{
			\node at \Point {\textbullet};
		}
	\end{tikzpicture}\\
	\caption{The 2-chain or hinge}\label{Fig: 2Chain}
	
\end{figure}
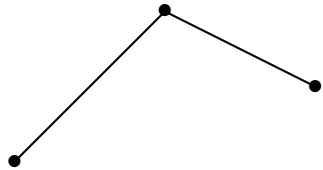

With this idea of trees in mind Iosevich and the author posed the question of whether one could verify (\ref{Eq: GraphDistConj}) for the 2-chain or hinge, the simplest non-rigid structure (see Figure \ref{Fig: 2Chain}). This was recently verified by Rudnev \cite{R19} who used a clever partitioning setup and a generalisation of the Guth-Katz incidence result on polynomial surfaces due to Sharir and Solomon. In this paper we extended Rudnev's result to all chains establishing that

\begin{equation}\label{Eq: k-Chain}
	|\Delta_{n-\text{chain}}(P)| \gtrsim \frac{|P|^{n}}{\log^{\frac{13}{2}(n-1)}|P|}.
\end{equation}

By the spanning-tree reduction we note that (\ref{Eq: k-Chain}) establishes (\ref{Eq: GraphDistConj}) for all chains of triangles and most generally any graph with a Hamiltonian path. We note also that (\ref{Eq: k-Chain}) doesn't apply to all rigid graphs, see Figure \ref{Fig: RigidNoPath}.

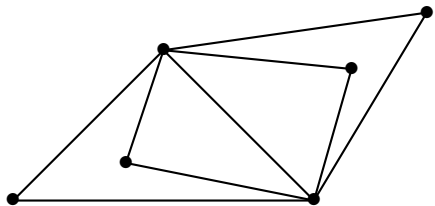
\begin{figure}[h]
	\centering
	\begin{tikzpicture}
		\draw[thick] (1,2)--(3,4)--(5,2)--(1,2);
		\draw[thick] (3,4)--(2.5,2.5)--(5,2);
		\draw[thick] (3,4)--(5.5,3.75)--(5,2);
		\draw[thick] (3,4)--(6.5,4.5)--(5,2);
		\foreach \Point in {(1,2), (3,4), (2.5,2.5), (5.5,3.75),(6.5,4.5),(5,2)}{
			\node at \Point {\textbullet};
		}
	\end{tikzpicture}\\
	\caption{A rigid graph with no Hamiltonian path}\label{Fig: RigidNoPath}
	
\end{figure}

Recent work has also been done on the number of realisations of a fixed chain--in the spirit of the unit distance conjecture--with first Palsson, Senger and Sheffer \cite{PSS19} providing strict upper bounds on the problem due to an example of Childs and lower bounds relating to the unit distance problem. These bounds were improved by Frankl and Kupavskii \cite{FK19} who removed the dependence on the unit distance conjecture in most cases. Neither of these results are strong enough to quickly establish a result as strong as (\ref{Eq: k-Chain}).

\subsection{Acknowledgements}
\noindent The author wishes to thank Alex Iosevich and Misha Rudnev for many helpful discussions and encouragement. The author wishes to thank Adam Sheffer, Josh Zahl and the participants of the MSRI summer school on the Polynomial method for many helpful discussions and MSRI, Berkeley for hosting the workshop. The author would also like to thank Doowon Koh for pointing out an error in a previous version of this paper.

\section{Statement of Results}

\noindent We prove the following

\begin{theorem}\label{Thm: GenChainBound}
	Let $P$ be a finite point set in $\mathbb{R}^2$ with the set of distance $n$-chains defined as
	$$ \Delta_n(P)=\{(|p_1-p_2|,|p_2-p_3|,\ldots,|p_n-p_{n+1}|):p_i \in P\}.$$
	We show that for $n=O_{|P|}(1)$ and $n\geq 3$ we have
	$$|\Delta_n(P)|\gtrsim \frac{|P|^{n}}{\log^{\frac{13}{2}(n-1)}|P|}.$$
\end{theorem}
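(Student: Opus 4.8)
The plan is to induct on $n\ge 2$, taking Rudnev's hinge theorem \cite{R19} as the base case $n=2$ and adjoining one edge of the chain per step; write $M=|P|$. First I would pass from the distance set to the \emph{chain energy}: letting $r_n(\vec t)$ be the number of ordered $n$-chains $(p_1,\dots,p_{n+1})\in P^{n+1}$ realising the vector $\vec t$, and $E_n(P)=\sum_{\vec t} r_n(\vec t)^2$ (the number of pairs of $n$-chains with a common distance vector), the energy construction of Elekes — Cauchy--Schwarz against the $M^{n+1}$ ordered $n$-chains — gives $|\Delta_n(P)|\ge M^{2(n+1)}/E_n(P)$, so it suffices to prove $E_n(P)\lesssim M^{n+2}\log^{\frac{13}{2}(n-1)}M$.

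Next I would feed this into the Elekes--Sharir--Guth--Katz dictionary. To each ordered pair $(a,b)\in P^2$ attach the line $\ell_{a,b}\subseteq\R^3$ parametrising the orientation-preserving rigid motions taking $a$ to $b$, and set $L=\{\ell_{a,b}\}$, $|L|=M^2$. Two $n$-chains $(p_i)$, $(q_i)$ share a distance vector exactly when, for each $i$, a generically unique rigid motion sends $(p_i,p_{i+1})$ to $(q_i,q_{i+1})$, and that motion lies on both $\ell_{p_i,q_i}$ and $\ell_{p_{i+1},q_{i+1}}$. Outside the standard degenerate configurations (zero-length edges; coincident, coplanar, or co-regular bundles of lines, handled as in \cite{GK15,R19}) this identifies $E_n(P)$, up to lower-order terms, with the number $W_n$ of walks of length $n$ in the intersection graph $\scr{G}$ of $L$. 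For $n=2$ this is $W_2=\sum_{\ell\in L}\deg_{\scr{G}}(\ell)^2$, and the bound $W_2\lesssim M^4\log^{13/2}M$ is precisely Rudnev's result; so the base case is in place, and the whole problem becomes: extend a walk by one edge at a cost of $M\log^{13/2}M$.

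For the inductive step I would write $W_n=\sum_{\ell\in L} w_{n-1}(\ell)\,\deg_{\scr{G}}(\ell)$, where $w_{n-1}(\ell)$ counts length-$(n-1)$ walks ending at $\ell$ and $\sum_\ell w_{n-1}(\ell)=W_{n-1}$, and aim for $W_n\lesssim (M\log^{13/2}M)\,W_{n-1}$ (with inductive hypothesis $W_{n-1}\lesssim M^{n+1}\log^{\frac{13}{2}(n-2)}M$). A plain maximum-degree bound is useless here, since $\deg_{\scr{G}}$ can reach $M^2$, so the real work is to exploit that the Guth--Katz arrangement, and \emph{every one of its subarrangements}, is structurally spread out. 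Concretely I would decompose $L$ dyadically by richness and, crucially, form the nested family $L\supseteq L^{(1)}\supseteq L^{(2)}\supseteq\cdots$ of lines that remain rich within the previous level; since each $L^{(s)}$ is again a set of rigid-motion lines, the general rich-line bound implicit in \cite{R19} (built on \cite{GK15} and the Sharir--Solomon surface-incidence estimate) applies to it verbatim, controlling $\sum_{\ell\in L^{(s)}}\deg_{L^{(s)}}(\ell)^2$ and hence the number of lines richer still. A length-$n$ walk that exceeds the target must concentrate on one of the $\lesssim n$ deep levels, where the per-edge extension cost is a degree averaged against the walk weight and so is pinned down by the rich-line bound at that level; telescoping the $\log^{13/2}$ loss across these levels (and absorbing the dyadic logarithms) then closes the induction, giving $W_n\lesssim M^{n+2}\log^{\frac{13}{2}(n-1)}M$.

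The hard part will be this last step — making the rich-line estimate genuinely uniform over all the nested subarrangements produced by iterated richness selection, and, above all, controlling the degenerate coplanar/co-regular contributions (the source of the exponent $13/2$ in \cite{R19}) so that over the $n-1$ iterations they cost only an extra $\log^{13/2}$ each, rather than any power of $M$. Everything else is dyadic pigeonholing.
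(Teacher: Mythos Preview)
Your proposal identifies exactly the right ingredients --- the Elekes energy reduction, the Elekes--Sharir--Guth--Katz translation to walks in the intersection graph of $L$, and the extension of Rudnev's rich-line bound to nested subarrangements --- and these are precisely the tools the paper uses. But the assembly differs, and the inductive step as you state it has a real gap.

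The paper never attempts to prove $W_n\lesssim (M\log^{13/2}M)\,W_{n-1}$. Instead it works from the \emph{centre} of the chain: for odd $n$ one writes $|\E_n(P)|\le\sum_p\nu(p)^2$, where $p$ ranges over intersection points and $\nu(p)$ counts half-chains of lines emanating from $p$; the even case is then recovered from the two neighbouring odd cases by a single Cauchy--Schwarz (Lemma~\ref{Lem: LongChainToShortChain}). After dyadically decomposing $\nu(p)$ over $(n-1)/2$ independent richness parameters $t_2,\dots,t_{(n+1)/2}$, each cell is bounded by playing the raw Guth--Katz point estimate (Corollary~\ref{Coro: tRichPoints}) applied at the central point off against the iterated rich-line bound (Lemma~\ref{Lem: IterativeLines}), and summing the cells gives $|\E_n|\lesssim|L|^{(n+2)/2}\log^{c(n-1)}|L|$ directly. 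The quantity $W_{n-1}$ is never invoked.

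Your inductive step, by contrast, asks for a \emph{weighted-degree} inequality:
\[
\sum_{\ell}w_{n-1}(\ell)\deg_{\scr G}(\ell)\;\lesssim\;(M\log^{13/2}M)\sum_{\ell}w_{n-1}(\ell).
\]
Rich-line bounds control how many lines $\ell$ have large $\deg_{\scr G}(\ell)$, but they say nothing about how the walk weight $w_{n-1}$ distributes across them, and $w_{n-1}$ can in principle concentrate precisely on the richest lines (indeed, by the log-convexity $W_n^2\le W_{n-1}W_{n+1}$ the ratios $W_n/W_{n-1}$ are nondecreasing in $n$, so one cannot bootstrap from small $n$). The sentence ``a length-$n$ walk that exceeds the target must concentrate on one of the $\lesssim n$ deep levels'' is exactly where a genuine argument is needed and none is supplied. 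If you unwind what would actually be required --- dyadically decomposing the entire walk by the richness of \emph{every} line it visits and bounding each profile via the iterated rich-line lemma --- you recover the paper's direct count, and $W_{n-1}$ never enters. So the induction on $n$ is a red herring: the nested-richness machinery bounds $W_n$ outright, not relative to $W_{n-1}$, and organizing the count symmetrically from the centre (with $(n+1)/2$ parameters rather than $n$) is what makes the case analysis against the Guth--Katz estimate close.
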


When $n=1$ the above is the Erd\H os distinct distance problem resolved by Guth and Katz \cite{GK15}, with a dominator of $\log|P|$. When $n=2$ the result above was shown by Rudnev \cite{R19} with an improved denominator of $\log^3|P|$.
 
We note that Theorem \ref{Thm: GenChainBound} combined with these results resolves Conjecture (\ref{Eq: GraphDistConj}) for all Hamiltonian graphs on $O(1)$ vertices.

\begin{corollary}\label{Coro: GraphBound}
Let $P$ be a point set in $\mathbb{R}^2$.  Let $G$ is a connected simple graph with $m=O_{|P|}(1)$ vertices and $m\geq 2$. Then if $G$ contains a Hamiltonian path we have that
$$  |\Delta_G(P)| \gtrsim \frac{|P|^{m-1}}{\log^{\gamma(m)}|P|}, $$
where $\gamma(2)=1,\gamma(3)=3$ and $\gamma(m)=\frac{13}{2}(m-2)$ for $m\geq 4$.
\end{corollary}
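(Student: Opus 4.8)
The plan is to deduce Corollary~\ref{Coro: GraphBound} from Theorem~\ref{Thm: GenChainBound}, together with the cited theorems of Guth--Katz~\cite{GK15} (for $m=2$) and Rudnev~\cite{R19} (for $m=3$), via the elementary observation that passing from a graph to a subgraph can only shrink the graphical-distance set, applied to a Hamiltonian path sitting inside $G$. All of the combinatorial-geometric content lives in Theorem~\ref{Thm: GenChainBound} and in those two cited results; the corollary itself is bookkeeping.

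First I would fix a Hamiltonian path $H\subseteq G$ and relabel the vertices of $G$ as $1,\dots,m$ so that $H$ has edge set $\{\{1,2\},\{2,3\},\dots,\{m-1,m\}\}$. For any assignment $i\mapsto p_i\in P$, the tuple $(|p_i-p_j|)_{\{i,j\}\in E(G)}$ lies in $\Delta_G(P)$, and discarding every coordinate not indexed by an edge of $H$ yields $(|p_1-p_2|,|p_2-p_3|,\dots,|p_{m-1}-p_m|)\in\Delta_{m-1}(P)$. This defines a coordinate-projection map $\pi\colon \Delta_G(P)\to\Delta_{m-1}(P)$, which is surjective: any element of $\Delta_{m-1}(P)$ arises from some assignment $p_1,\dots,p_m\in P$, and that same assignment (using the same points on the remaining edges of $G$) is a $\pi$-preimage of it. Hence $|\Delta_G(P)|\ge|\pi(\Delta_G(P))|=|\Delta_{m-1}(P)|$. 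Note that connectivity of $G$ is automatic once $G$ has a Hamiltonian path, and the edges of $G$ outside $H$ play no role in this lower bound.

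It then remains to lower-bound $|\Delta_{m-1}(P)|$, and the value of $m$ dictates which input to invoke. Since $m=O_{|P|}(1)$, the quantity $n:=m-1$ is also $O_{|P|}(1)$, so Theorem~\ref{Thm: GenChainBound} applies whenever $n\ge 3$, i.e. $m\ge 4$, and gives $|\Delta_{m-1}(P)|\gtrsim |P|^{m-1}/\log^{\frac{13}{2}(m-2)}|P|$, which is the claimed bound with $\gamma(m)=\frac{13}{2}(m-2)$. For $m=3$ one has $\Delta_2(P)$, the set of hinge ($2$-chain) data, and Rudnev's theorem~\cite{R19} gives $|\Delta_2(P)|\gtrsim|P|^2/\log^3|P|$, so $\gamma(3)=3$. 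For $m=2$, $\Delta_1(P)$ is the ordinary distance set of $P$ and Guth--Katz~\cite{GK15} gives $|\Delta_1(P)|\gtrsim|P|/\log|P|$, so $\gamma(2)=1$.

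The only point worth flagging as an ``obstacle'' is a mild one: Theorem~\ref{Thm: GenChainBound} is stated for $n\ge 3$, so the small cases $m\in\{2,3\}$ genuinely need the separately cited results rather than a single uniform argument, and the implied constants depend on $m$, which is harmless under the standing hypothesis $m=O_{|P|}(1)$. Beyond assembling these three regimes there is nothing further to prove.
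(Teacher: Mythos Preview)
Your proof is correct and is essentially the same as the paper's: both reduce to the $(m-1)$-chain via the Hamiltonian path and then invoke Theorem~\ref{Thm: GenChainBound} for $m\ge 4$, Rudnev for $m=3$, and Guth--Katz for $m=2$. The paper phrases the reduction as ``the number of distance graphs of $G$ is bounded below by that of a spanning tree'' (citing \cite[Section~4]{IP18}), while you spell out the same inequality via the surjective coordinate projection $\pi$, but this is the identical argument.
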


We require a Hamiltonian path so that a spanning tree of $G$ is the $n$-chain. Corollary \ref{Coro: GraphBound} then follows immediately as the number of distance graphs of $G$ is bounded below by the number of distinct distance graphs of a spanning tree, see \cite[Section 4]{IP18}.

We note that Corollary \ref{Coro: GraphBound} is sharp up to $\log$ factors for any graph $G$. We expect that one can improve the $\log$ factor in the general case to $\log^{n-1}|P|$ and even further when the graph contains cycles. For example we expect further improvement in the case of the 4-cycle to

$$|\Delta_{C_4}(P)| \gtrsim \frac{|P|^{3}}{\log^2|P|},$$

an improvement here would be of extreme interest as we believe this a hard problem.

We note that the $l^2$ approach will fail to give a sharp bound for the 3-star. Indeed if one preforms the energy calculation below, see (\ref{Eq: CauchySchwarzEnergy}), then to obtain the sharp bound of $\Delta_{3-star}(P)\gtrsim |P|^{3-o(1)}$ one would need $\E_{3-star}(P)\lesssim |P|^5$. However, if one considers the example of a point set with $N$ points on each of three concentric circles and a point at their centre we have $\sim N^3$ realisations of the three star rooted at the centre. Choosing a pair of such centre rooted 3-stars gives a member of the energy thus $|\E_{3-star}(P)|\gtrsim |P|^6$.

%
%

\section{Proof of Theorem \ref{Thm: GenChainBound}}

To prove Theorem \ref{Thm: GenChainBound} we combine the famous breakthrough of Guth-Katz \cite{GK15} and the subsequent generalisations with a generalisation of a result of Rudnev from \cite{R19}.

First the generalisation of Guth-Katz's famous incidence result which one can find in \cite[Theorem 1.1]{SS18} and \cite[Theorem 12.1]{G16Book}

\begin{theorem}(Guth-Katz)\label{Thm: GuthKatz}
	Let $P$ be a set of points and $L$ be a set of lines in $\R^3$,  let $s$ be a parameter so that $|L|^{1/2}\leq s\leq |L|$ and no plane contains $s$ lines of $L$. The number of incidence between $P$ and $L$ then satisfies
	
	$$I(P,L) \lesssim |L|^{3/4}|P|^{1/2} + s^{1/3}|L|^{1/3}|P|^{2/3} + |L| + |P|.$$
\end{theorem}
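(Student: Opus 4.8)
I would prove this by the polynomial partitioning method of Guth--Katz, in the form used in \cite{SS18,G16Book}; the engine is polynomial partitioning together with the structure theory of lines on algebraic surfaces. After the standard reductions --- discard the points on at most one line (they contribute $\le|P|$ to $I(P,L)$), and use crude Cauchy--Schwarz estimates to dispose of the regimes where $|P|$ and $|L|$ are very far apart, so that the two ``main'' terms dominate the error $|P|+|L|$ --- I would fix a large constant $D$ and apply the polynomial partitioning theorem in $\R^3$: there is a nonzero $g$ with $\deg g\le D$ whose zero set $Z=Z(g)$ cuts $\R^3\setminus Z$ into $O(D^3)$ cells, each meeting $\lesssim|P|/D^3$ points of $P$. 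Write $P_0=P\cap Z$, $P_1=P\setminus Z$, and split $L$ into $L_0$ (the lines lying inside $Z$) and $L_1=L\setminus L_0$.

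\emph{Cells and off-surface incidences.} A line of $L_1$ meets $Z$ in at most $D$ points, so it enters at most $D+1$ cells and it meets $P_0$ in at most $D$ points; hence $\sum_\Omega|L_\Omega|\le(D+1)|L|$ and $I(P_0,L_1)\lesssim D|L|$. In each cell $\Omega$ I would bound $I(P\cap\Omega,L_\Omega)$ --- by planar Szemer\'edi--Trotter after a generic projection, or by the inductive hypothesis, the no-$s$-rich-plane condition being hereditary --- and sum over the $O(D^3)$ cells using H\"older's inequality against $\sum_\Omega|L_\Omega|\le(D+1)|L|$; with $D$ a constant this reassembles to $\lesssim|L|^{3/4}|P|^{1/2}+s^{1/3}|L|^{1/3}|P|^{2/3}+|P|+|L|$.

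\emph{Incidences on $Z$.} Taking $g$ square-free, decompose $Z$ into irreducible components $V_1,\dots,V_t$ with $\sum\deg V_i\le D$. A point on two components lies on a curve $V_i\cap V_j$ of degree $\le D^2$, which contains $O(D^2)$ lines and which any line not inside it meets $O(D^2)$ times, so those incidences are harmless. If $V_i$ is a plane it contains fewer than $s$ lines of $L$ by hypothesis, and planar Szemer\'edi--Trotter gives $\lesssim|P\cap V_i|^{2/3}s^{2/3}+|P\cap V_i|+s$ incidences on it; summing over the $\le D$ planar components with $\sum_i|P\cap V_i|\le|P|$ produces the term $\lesssim s^{1/3}|L|^{1/3}|P|^{2/3}$, and this is the only place where $s$ enters. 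If $V_i$ is irreducible of degree $\ge2$ and is not a plane, I would invoke the Guth--Katz structure theory: apart from $O(\deg V_i)$ exceptional lines, every line on $V_i$ lies in one of (at most) two rulings; through a point of $V_i$ that is neither singular nor \emph{flat} pass at most two such lines; and the singular-and-flat locus of $V_i$ is $V_i\cap Z(h)$ for an auxiliary polynomial $h$ with $\deg h=O(\deg V_i)$ --- hence a curve, treated as before. Charging incidences on $V_i$ to the ruling parameters converts them into a planar-type count carrying no $s$-loss, so the curved components feed only the $|L|^{3/4}|P|^{1/2}$ term. Collecting everything and closing the induction (equivalently, iterating the constant-degree partition) gives the four terms.

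\emph{Main obstacle.} The hard part is the analysis on a non-planar component of the partitioning surface: one needs the quantitative algebraic geometry of ruled surfaces --- the Cayley--Salmon bound on the number of lines of a non-ruled surface, the existence of the low-degree ``flatness'' polynomial cutting out the bad locus, and the ``at most two ruling lines through a generic point'' phenomenon --- exactly the ingredients isolated by Guth and Katz, and then the bookkeeping that makes the induction close while confining the entire $s$-dependence to the planar components. Everything else is H\"older's inequality and the polynomial partitioning theorem. (An equivalent route reduces the bound instead to the Guth--Katz estimate for the number of $k$-fold points of $L$ --- combined with planar Szemer\'edi--Trotter on the $s$-saturated planes and a dyadic decomposition in $k$ --- but the hard input is then the same ruled-surface geometry, now packaged inside that estimate; and of course in the present paper one simply cites \cite{SS18,G16Book}.)
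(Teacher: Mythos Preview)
Your sketch is correct and follows the standard polynomial-partitioning route; but note that the paper does not prove Theorem~\ref{Thm: GuthKatz} at all --- it is quoted as a black box from \cite[Theorem~1.1]{SS18} and \cite[Theorem~12.1]{G16Book}, exactly as you yourself acknowledge in your final parenthetical. So there is nothing to compare your argument against beyond the proofs in those references, which your outline tracks faithfully.
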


We will use the following corollary,

\begin{corollary} \label{Coro: tRichPoints}
	Let $L$ be a set of lines in $\R^3$, let $s$ be a parameter so that $|L|^{1/2}\leq s$ and no plane contains $s$ lines of $L$. Let $P_t$ be the set of points where at least $t$ of these lines meet. Then there is a constant $t_0$ such that for $t\geq t_0$ we have 
	$$ |P_t| \lesssim \frac{|L|^{3/2}}{t^2} + \frac{s|L|}{t^3} + \frac{|L|}{t}. $$
\end{corollary}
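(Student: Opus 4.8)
The plan is to run the standard rich-point double-counting argument off the Guth--Katz incidence bound. First I would invoke the trivial incidence lower bound: every point of $P_t$ meets at least $t$ lines of $L$, so $t|P_t|\le I(P_t,L)$. For the upper bound I would feed the pair $(P_t,L)$ into Theorem \ref{Thm: GuthKatz}; the two hypotheses needed there, namely $|L|^{1/2}\le s$ and that no plane contains $s$ lines of $L$, are precisely what we have assumed. (We may also assume $s\le|L|$: if $s>|L|$ one instead applies the theorem with $s$ replaced either by $\lceil|L|^{1/2}\rceil$ or by twice the true maximal number of coplanar lines of $L$, whichever is admissible, and checks that the resulting estimate is no larger than the claimed one, using $|L|^{1/2}\le s$.) This yields
$$ t|P_t|\;\le\; I(P_t,L)\;\lesssim\; |L|^{3/4}|P_t|^{1/2}+s^{1/3}|L|^{1/3}|P_t|^{2/3}+|L|+|P_t|. $$

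Next I would absorb the two error terms. Taking $t_0$ to be a large enough absolute constant and restricting to $t\ge t_0$, the additive term $|P_t|$ on the right (with its implied constant) is at most $\tfrac12 t|P_t|$ and so can be moved to the left, leaving
$$ t|P_t|\;\lesssim\; |L|^{3/4}|P_t|^{1/2}+s^{1/3}|L|^{1/3}|P_t|^{2/3}+|L|. $$
Now the right-hand side is within a constant of the largest of its three summands, and in each of the three cases one simply solves for $|P_t|$: the first term dominating gives $|P_t|\lesssim|L|^{3/2}/t^2$, the second gives $|P_t|\lesssim s|L|/t^3$, and the third gives $|P_t|\lesssim|L|/t$. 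Summing the three possibilities gives the stated bound $|P_t|\lesssim |L|^{3/2}/t^2+s|L|/t^3+|L|/t$.

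This is a bookkeeping derivation rather than a statement with a genuine obstacle; the only place needing a moment's thought is the choice of $t_0$, which is forced precisely by the need to absorb the $|P_t|$ error term produced by Theorem \ref{Thm: GuthKatz} into $t|P_t|$, together with the minor degenerate regime $s>|L|$ or many coplanar lines, handled as indicated above.
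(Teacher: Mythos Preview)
Your argument is correct and is exactly the standard rich-point derivation from the incidence bound; the paper itself does not spell out a proof of this corollary, simply stating it as an immediate consequence of Theorem~\ref{Thm: GuthKatz}, so your approach matches what the paper intends. The only cosmetic point is that the degenerate case $s>|L|$ is a non-issue in the paper's applications (where $L$ is always a set of Guth--Katz lines with $s=|L|^{1/2}$), but your handling of it is fine.
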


In \cite{R19} Rudnev proves his main result by establishing the following theorem, though it is not explicitly stated\footnote{Rudnev shows that if one defines the $l^2$ energy $\E_2(P)=\{(p_1,p_2,p_3,p_1',p_2',p_3'): |p_i-p_{i+1}|=|p_i'-p_{i+1}'|\}$ related to the 2-chain one has $|\E_2(P)|\lesssim \log|L| \sum_{k,t}|L_{k,t}|k^2t^2$ and then establishes the bound $|L_{k,t}|k^2t^2\lesssim |L|^2$. Note that one gets two further $\log|L|$ terms from the support of the sums.}.

\begin{theorem}(Rudnev \cite{R19}) \label{Thm: RichLines}
	Suppose that $L$ is a set of lines so that we have no more than $|L|^{1/2}$ in any plane or regulus then if $L_{k,t}$ are the lines with $t$ points with $k$ lines of $L$ through them then we have
	
	$$|L_{k,t}| \lesssim \frac{|L|^2}{k^2t^2}.$$
\end{theorem}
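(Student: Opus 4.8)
The plan is to bound $M:=|L_{k,t}|$ by splitting on the size of $kt$ relative to $|L|$, using Corollary~\ref{Coro: tRichPoints} to control rich points and then incidence/counting arguments to control rich lines. Let $P_{\geq k}$ be the set of points lying on at least $k$ lines of $L$; applying Corollary~\ref{Coro: tRichPoints} with $s=|L|^{1/2}$ gives $|P_{\geq k}|\lesssim |L|^{3/2}/k^2$ whenever $t_0\le k\lesssim|L|^{1/2}$, and one checks that the ranges $k\gtrsim|L|^{1/2}$ (where $kt\lesssim|L|$ forces $t\lesssim|L|^{1/2}$, so Szemer\'edi--Trotter for $(L_{k,t},P_{\geq k})$ already suffices) and $k<t_0$ can be absorbed into the discussion below. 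Since every line of $L_{k,t}$ carries $\sim t$ points of multiplicity $\sim k$, counting incidences between $L_{k,t}$ and $P_{\geq k}$ while noting that a point of multiplicity $\sim k$ lies on at most $\sim k$ lines of $L_{k,t}$ yields the ``master estimate'' $M\lesssim |L|^{3/2}/(kt)$. When $kt\lesssim|L|^{1/2}$ the target $M\lesssim|L|^2/(k^2t^2)$ is weaker than the trivial bound $M\le|L|$; and the Szemer\'edi--Trotter rich-line bound $M\lesssim |P_{\geq k}|^2/t^3+|P_{\geq k}|/t$ applied to $(L_{k,t},P_{\geq k})$ delivers the target precisely when $k^2t\gtrsim|L|$ and $t\lesssim|L|^{1/2}$. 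So it remains to gain the factor $kt/|L|^{1/2}$ over the master estimate in the range $|L|^{1/2}\lesssim kt\lesssim|L|$.

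The key geometric input for the upper part of this range is that \emph{any three distinct lines of $L_{k,t}$ have only $O(|L|^{1/2})$ common transversals in $L$ meeting each of the three at a point of multiplicity $\sim k$}: if the three lines are pairwise skew such transversals lie on the doubly-ruled quadric they determine, so the regulus hypothesis caps them at $|L|^{1/2}$; and if two of the three are coplanar, a common transversal either passes through their intersection point — which, being of multiplicity $\sim k\lesssim|L|^{1/2}$, carries only $\sim k$ lines of $L$ — or else lies in their common plane, which carries at most $|L|^{1/2}$ lines of $L$. Feeding this into a K\H{o}v\'ari--S\'os--Tur\'an/convexity count for the bipartite ``transversal incidence'' graph between $L_{k,t}$ and $L$ — whose left-degrees are all $\sim kt$, hence whose edge count is $e\sim ktM$, and which has no three left-vertices with $\gtrsim|L|^{1/2}$ common neighbours — gives $e\lesssim|L|^{5/6}M+|L|$, so once $kt\gtrsim|L|^{5/6}$ the term $|L|$ must dominate and $M\lesssim|L|/(kt)\le|L|^2/(k^2t^2)$, as required.

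This leaves the genuinely delicate middle range $|L|^{1/2}\lesssim kt\lesssim|L|^{5/6}$ (morally, $t$ somewhat larger than $|L|^{1/2}$ with $k$ small), where the target is to beat Szemer\'edi--Trotter and the transversal count by a genuine polynomial factor. Here I would run Rudnev's polynomial-partitioning scheme: partition $\R^3$ by the zero set of a polynomial of carefully chosen degree $D$, so that within each cell the numbers of relevant lines and of points of $P_{\geq k}$ drop enough for Corollary~\ref{Coro: tRichPoints} (or an induction) to finish cell-by-cell, while the lines that lie on the partitioning surface are handled by the Sharir--Solomon incidence bound for points and lines on a two-dimensional variety, again invoking the plane and regulus hypotheses to stop too many lines from concentrating on one component. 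The main obstacle is exactly this step: every soft argument — double counting, K\H{o}v\'ari--S\'os--Tur\'an on transversals, or substituting the rich-point bound back into Theorem~\ref{Thm: GuthKatz} — stalls a polynomial factor short of $|L|^2/(k^2t^2)$ in this band, and closing the gap requires the full algebraic machinery with the partition degree balanced against both the cell recursion and the surface term; this is the ``clever partitioning setup'' of \cite{R19}. (A secondary, routine difficulty is bookkeeping the boundary cases $k<t_0$ and $k\gtrsim|L|^{1/2}$, where Corollary~\ref{Coro: tRichPoints} is not directly available or not needed and one falls back on the trivial bound together with the transversal count.)
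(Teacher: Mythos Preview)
Your proposal is explicitly incomplete: you cover the ranges $kt\lesssim|L|^{1/2}$ (trivial), $k\gtrsim|L|^{1/2}$ and the band $k^2t\gtrsim|L|$, $t\lesssim|L|^{1/2}$ (Szemer\'edi--Trotter), and $kt\gtrsim|L|^{5/6}$ (your K\H{o}v\'ari--S\'os--Tur\'an transversal count), but for the remaining middle range you only \emph{describe} what you would do and defer to \cite{R19}. That is the heart of the theorem, so as it stands this is a plan, not a proof.

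Moreover, your sketch of the hard step is not what the paper (following Rudnev) actually does. There is no cell decomposition and no recursion in cells. The argument runs as follows: one first splits on whether $k=O(1)$, in which case De~Zeeuw's line--line incidence bound (Theorem~\ref{Thm: LineLine}) already gives $t|L_{k,t}|\lesssim|L|\,|L_{k,t}|^{1/2}+|L|^{1/2}|L_{k,t}|$ and the result follows. For $k=\Omega(1)$ one lower-bounds $t|L_{k,t}|\le I(P_k,L_{k,t})$ and applies the Guth--Katz bound (Theorem~\ref{Thm: GuthKatz}) to this incidence count. In the subcases where this alone does not close, one uses that $|L_{k,t}|$ is already small enough to place \emph{all} of $L_{k,t}$ on the zero set $Z$ of a single polynomial of degree $D\lesssim|L_{k,t}|^{1/2}$; the points $P_k$ supported on $L_{k,t}$ then lie on $Z$ as well. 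One splits the full line set $L$ into $L_0$ (not on $Z$, so contributing at most $D|L|$ incidences) and $L_Z$ (on $Z$), strips out linear components of $Z$, and bounds $I(P_k,L_Z)$ on the remaining surface via Sharir--Solomon (Theorem~\ref{Thm: SharirSolomom}). Balancing the resulting terms against Corollary~\ref{Coro: tRichPoints} finishes each subcase.

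Your K\H{o}v\'ari--S\'os--Tur\'an transversal argument for $kt\gtrsim|L|^{5/6}$ is neat and does not appear in the paper's proof; it is a genuine alternative for that range. But it does not extend to the middle range, and your ``partition into cells and recurse'' picture is the wrong mechanism for what remains: the leverage comes from containing $L_{k,t}$ in a surface, not from cutting space into pieces.
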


We generalise this result to the following theorem which allow one to iterate on subsets.

\begin{theorem}\label{Thm: GenRichLines}
	Suppose that $L$ is a set of lines so that we have no more than $s\geq|L|^{1/2}$ in any plane or regulus and no more that $s\geq |L|^{1/2}$ lines concurrent. Then if $L_{k,t}$ are the lines of $L$ that contain $t$ points with $k$ lines of $L$ through them then we have
	
	$$|L_{k,t}| \lesssim \frac{|L|s^2}{k^2t^2} + \frac{|L|s\log(s)}{kt}.$$
\end{theorem}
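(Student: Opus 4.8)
The plan is to derive everything from the single clean estimate $|L_{k,t}|\lesssim \frac{|L|s\log s}{kt}$, after which the stated two-term bound is automatic: if $kt\le s$ then already $\frac{|L|s^2}{k^2t^2}\ge |L|\ge |L_{k,t}|$ and there is nothing to prove, whereas if $kt>s$ the term $\frac{|L|s\log s}{kt}$ alone dominates the right-hand side. So after recording two free reductions — we may assume $k$ exceeds the absolute constant $t_0$ of Corollary \ref{Coro: tRichPoints} (smaller $k$ being absorbed into the implied constant via the trivial bound $|L_{k,t}|\le|L|$), and by the above we may assume $kt>s$ — the whole theorem comes down to that one inequality.

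The main step is a flag count. Write $N=|L_{k,t}|$, and for each $\ell\in L_{k,t}$ fix $t$ of its points lying on at least $k$ lines of $L$ (its "marked" points); for a point $p$ let $r_p$ be the number of lines of $L$ through $p$. I would count the flags $(\ell,p,m)$ with $\ell\in L_{k,t}$, $p$ a marked point of $\ell$, and $m\in L$ through $p$. From the line side there are at least $N\cdot t\cdot k$ of them. From the point side, the number of pairs $(\ell,p)$ with $\ell\in L_{k,t}$ and $p$ a marked point of $\ell$ at a fixed point is at most $r_p$ (at most all lines of $L$ through $p$), and the number of choices of $m$ is exactly $r_p$; hence the flag count is at most $\sum_p r_p^2$, the sum running over points that are marked for some line, all of which carry between $k$ and $s$ lines of $L$ (the upper bound being the concurrency hypothesis). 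Thus
$$ Ntk\ \le\ \sum_{p:\,k\le r_p\le s} r_p^2. $$

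To bound the point sum I would decompose dyadically over $r_p\sim 2^j$ with $k\lesssim 2^j\le s$ — at most $\lesssim\log s$ scales — so that the right side is $\lesssim\sum_j 4^{j}\,|P_{2^j}|$, where $P_\tau$ denotes the set of points on at least $\tau$ lines of $L$. Corollary \ref{Coro: tRichPoints} applies (since $s\ge|L|^{1/2}$ and no plane carries $s$ lines) and gives $|P_{2^j}|\lesssim |L|^{3/2}4^{-j}+s|L|\,8^{-j}+|L|\,2^{-j}$, hence $4^{j}|P_{2^j}|\lesssim |L|^{3/2}+s|L|\,2^{-j}+2^{j}|L|$; summing the three geometric-type pieces over the $\lesssim\log s$ scales yields $\sum_p r_p^2\lesssim |L|^{3/2}\log s+\tfrac{s|L|}{k}+s|L|\lesssim s|L|\log s$, where the last step uses $|L|^{1/2}\le s$. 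Combined with the flag inequality this gives $N\lesssim \frac{s|L|\log s}{kt}$, which finishes the argument.

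The delicate points are few but real: the upper bound on the flag count must not be wasteful, which is precisely why one feeds in the full three-term rich-point estimate of Corollary \ref{Coro: tRichPoints}, and the concurrency hypothesis is what caps the dyadic range at $s$ instead of $|L|$ (without it the sum $\sum 2^j|L|$ would run up to $|L|^2$ and the bound would collapse); one must also be slightly careful about points of very small richness, which is why $k$ is pushed above $t_0$ at the start. I would also remark that this is the economical route, and that the sharper term $\frac{|L|s^2}{k^2t^2}$ is exactly what one obtains instead by randomly sampling $L$ down to $\sim |L|^2/s^2$ lines — so that each plane or regulus then carries only $\sim |L|/s$ of them — and applying Rudnev's Theorem \ref{Thm: RichLines}; this is the mechanism that lets the chain argument iterate efficiently on nested subsets of Guth–Katz lines.
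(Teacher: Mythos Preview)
Your flag-count argument is correct and is a genuinely different, far more economical route than the paper's. The paper's Appendix~A follows Rudnev's original scheme: it splits into $k=O(1)$ (handled via De~Zeeuw's line--line bound, Theorem~\ref{Thm: LineLine}) and $k=\Omega(1)$, and in the latter case applies Guth--Katz to $I(P_k,L_{k,t})$, then embeds $L_{k,t}$ in a low-degree algebraic surface and invokes the Sharir--Solomon incidence theorem (Theorem~\ref{Thm: SharirSolomom}), with a separate duality argument to control the linear factors; this produces many subcases. Your argument bypasses all of that machinery: counting flags $(\ell,p,m)$ and bounding $\sum_{p:k\le r_p\le s} r_p^2$ dyadically via Corollary~\ref{Coro: tRichPoints} alone gives $|L_{k,t}|kt\lesssim s|L|\log s$ directly, which (as you observe) is equivalent to the stated two-term bound. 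A pleasant by-product is that you never use the regulus hypothesis --- only the plane and concurrency bounds enter --- whereas the paper needs it through De~Zeeuw's theorem.

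One small gap: your reduction ``for $k\le t_0$ use the trivial bound $|L_{k,t}|\le |L|$'' is not valid. Take $s=|L|^{1/2}$, $k=2$, $t=|L|^{3/4}$: then $kt>s$ and the claimed bound is $\lesssim |L|^{3/4}\log|L|$, which the trivial bound does not give. The fix is immediate and stays entirely within your framework: simply run the flag argument for all $k\ge 2$, and for the dyadic scales $2\le 2^j<t_0$ bound the contribution by $t_0^2|P_2|\lesssim s|L|$ (this estimate on the number of intersection points follows at once from Theorem~\ref{Thm: GuthKatz} with $P=P_2$, since $2|P_2|\le I(P_2,L)$ forces $|P_2|\lesssim |L|^{3/2}+s|L|\lesssim s|L|$). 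That extra $O(s|L|)$ is absorbed into your existing $s|L|\log s$, and the rest of the proof is unchanged.
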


We prove this in Appendix \ref{App: GenRichLines} as the proof only requires minor modifications to Rudnev's argument from \cite{R19}.
We will note the following corollary of Theorem \ref{Thm: GenRichLines}.

\begin{corollary}\label{Coro: kRichLines}
	If $L$ is a set of lines with no more than $s\geq|L|^{1/2}$ in any plane or regulus then if $L_r$ are the lines with at least $r$ lines of $L$ passing through them we have
	
	$$|L_r| \lesssim \frac{|L|s^2\log^2|L|}{r^2} + \frac{|L|s\log^3|L|}{r}.$$
\end{corollary}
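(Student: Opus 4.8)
The plan is to read Corollary \ref{Coro: kRichLines} off Theorem \ref{Thm: GenRichLines} by a dyadic pigeonhole on the multiplicities of the points lying on a rich line, so that all the genuine content stays inside Theorem \ref{Thm: GenRichLines}. (Throughout, $L_r$ is taken to consist of lines of $L$, which is the case needed for the iteration; and since for $r\lesssim\log|L|$ the asserted bound already exceeds the trivial $|L_r|\leq|L|$, we may assume $r\gtrsim\log|L|$.) Fix $\ell\in L_r$ and write $m(p)$ for the number of lines of $L$ through a point $p$. Distinct lines of $L$ cross $\ell$ at distinct points, and since $\ell\in L$ each such crossing point $p$ has $m(p)\geq2$; hence the hypothesis that $\geq r$ lines of $L$ meet $\ell$ gives $\sum_{p\in\ell,\,m(p)\geq2}(m(p)-1)\geq r$, and so $\sum_{p\in\ell,\,m(p)\geq2}m(p)\gtrsim r$. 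Splitting the admissible range $2\leq m(p)\leq|L|$ into the $O(\log|L|)$ dyadic blocks $[2^i,2^{i+1})$ and pigeonholing, some dyadic scale $k=k(\ell)$ carries $\gtrsim r/\log|L|$ of this sum; since the mass at scale $k$ is comparable to $k\cdot b_k(\ell)$, where $b_k(\ell)$ counts the points of $\ell$ with $m(p)\in[k,2k)$, we get $b_k(\ell)\gtrsim r/(k\log|L|)$. Thus every $\ell\in L_r$ lies in $L_{k,t}$ for at least one dyadic $k$, where $t:=\lceil c\,r/(k\log|L|)\rceil$ and $kt\gtrsim r/\log|L|$.

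Now invoke Theorem \ref{Thm: GenRichLines} at each of the $O(\log|L|)$ scales. Using $kt\gtrsim r/\log|L|$ and $\log s\leq\log|L|$,
$$|L_{k,t}|\;\lesssim\;\frac{|L|s^2}{(kt)^2}+\frac{|L|s\log s}{kt}\;\lesssim\;\frac{|L|s^2\log^2|L|}{r^2}+\frac{|L|s\log^2|L|}{r},$$
and a crude union over the scales already yields a bound of the claimed shape, with one extra logarithm in the leading term. To recover the exponent $\log^2|L|$ exactly one separates the lines $\ell$ carrying a point of multiplicity $\geq c\,r/\log|L|$ — for these $\ell\in L_{k,1}$ with $k$ dyadic in $[c\,r/\log|L|,\,|L|]$, and the estimates $|L_{k,1}|\lesssim|L|s^2/k^2+|L|s\log s/k$ sum \emph{geometrically} in $k$, losing no further logarithm — from the remaining lines, all of whose points have multiplicity below $c\,r/\log|L|$, for which the dyadic argument above is run over only the $O(\log(r/\log|L|))$ surviving scales; assembling the two cases gives Corollary \ref{Coro: kRichLines}.

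I do not anticipate a real obstacle here: Theorem \ref{Thm: GenRichLines} supplies the analytic input and what remains is organisational. The one place calling for care is the bookkeeping of logarithmic factors — one from the multiplicity pigeonhole, one from summing over dyadic scales, and one from the $\log s$ in Theorem \ref{Thm: GenRichLines} — and in particular the geometric summation over the heavy-point scales that trims the leading term from $\log^3|L|$ down to the stated $\log^2|L|$; this is the delicate step. The remaining matters are routine: the degenerate range $r\lesssim\log|L|$ (where the conclusion follows from $|L_r|\leq|L|$), the convention $L_r\subseteq L$ (without which a line meeting $r$ lines of $L$ but passing through no point of multiplicity $\geq2$ would have no rich points to pigeonhole), and checking that the parameters $k,t$ produced above lie within the range where Theorem \ref{Thm: GenRichLines} applies.
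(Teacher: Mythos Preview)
Your approach is essentially the paper's: cover $L_r$ by the dyadic sets $L_{k,t}$, apply Theorem~\ref{Thm: GenRichLines} at each scale, and sum. The paper's write-up is terser --- it simply records $|L_r|\lesssim\sum_{kt\gtrsim r}|L_{k,t}|$, uses $1/(kt)\lesssim 1/r$ in each summand, and multiplies by the $\log^2|L|$ support of the double sum --- and does not attempt your heavy/light refinement; that refinement is extra bookkeeping beyond what the paper does, though note that your light-point case still ranges over $O(\log r)\leq O(\log|L|)$ scales, so as sketched it does not actually shave the extra logarithm you are after.
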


\begin{proof}
We want to count $L_r$ we want to count the lines with $\sim t$ points with $\sim k$ lines through them such that $tk \gtrsim r$. So we estimate the following sum

$$ |L_r| = \sum_{tk\geq r} |L_{k,t}| \lesssim \sum_{tk\geq r} \left(\frac{|L|s^2}{k^2t^2} + \frac{|L|s\log(s)}{kt}\right), $$

using Theorem \ref{Thm: GenRichLines}. We then note that $\dfrac{1}{kt}\leq \dfrac{1}{r}$ and as the support of the sum is $\leq \log^2|L|$ we have the claimed result

$$ |L_r| \lesssim \frac{|L|s^2\log^2|L|}{r^2} + \frac{|L|s\log^3|L|}{r}. $$
\end{proof}

We note that Corollary \ref{Coro: kRichLines} is the best one can do as the Guth-Katz bound \linebreak $\sum_l \delta_{l,l'} \lesssim |L|^{3/2}\log|L|$ shows us that a typical line meets about $|L|^{1/2}\log|L|$ lines. So when $r=|L|^{1/2}\log|L|$ we have that $|L_r|\sim |L|$.

We note that the above doesn't follow from De Zeeuw's line-line incidence theorem \cite[Lemma 3.1]{dZ16} when $k=\Omega(1)$ as De Zeeuw's theorem requires one of the line sets to have $O(1)$ lines concurrent. 

It is Theorem \ref{Thm: GenRichLines} and Corollary \ref{Coro: kRichLines} which allow for the result here, as they allow us to continue to iterate the methodology of Theorem \ref{Thm: RichLines} to nested subsets of the Guth-Katz lines.

\subsection*{Proof of Theorem \ref{Thm: GenChainBound}}\label{Sec: Proof}~

We are now ready to prove the bound for all Erd\H os chains. We recall the setup of Elekes which allows us to count distances via counting the energy. Then we will use the ideas of Elekes-Sharir and Guth-Katz to count this energy via an incidence problem in $\R^3$.

First let $\nu(z) = |\{(p_1,p_2,\ldots, p_{n+1}) \in P^{n+1} : |p_i-p_{i+1}|=z_i \text{~~~for~~~} i=1,\ldots, n\}|$ be the number of times the $n$-chain with distances $z=(z_1,z_2,\ldots,z_n)$ arises. Then we can count the number of $n$-chains using

\begin{equation}\label{Eq: CauchySchwarzEnergy}
\left(|P|^{n+1}\right)^2 = \left(\sum_{z \in \Delta_n(P)} \nu(z)\right)^2 \leq |\Delta_n(P)|\sum_z \nu^2(z).
\end{equation}

We note that this final sum gives the size of the following energy set,

$$ \E_n(P) = \{(p_1,\ldots, p_{n+1}, p_1', \ldots, p_{n+1}'): |p_i-p_{i+1}| = |p_i' - p_{i+1}'| \text{~~~for~~~} i=1,\ldots, n\}. $$ 

So we aim to establish the bound $|\mathbb{E}_n(P)| \lesssim |L|^{\frac{n+2}{2}}\log^{\frac{7}{2}(n-1)}|L|$.

These energies can be thought of as configurations of Guth-Katz lines in $\R^3$ each entry in $\E_n$ corresponds to the $n+1$ lines $l_{p_1p_1'},\ldots,l_{p_{n+1}p_{n+1}'}$ having the intersections

$$ l_{p_ip_i'} \cap l_{p_{i+1}p_{i+1}'} \text{~~~for~~~} i=1,\ldots, n, $$

as in Figures \ref{Fig: 3Chain} and Figure \ref{Fig: 4Chain} for the 3-chain and 4-chain respectively.


\begin{figure}[h]
	\centering
	\begin{minipage}{0.45\textwidth}
		\centering
			\begin{tikzpicture}
		\draw[thick] (1,2)--(3,4)--(6,2)--(7,3);
		\draw (1,6)--(3,10);
		\draw (2,10)--(5,6);
		\draw (3,6)--(6,10);
		\draw (4,10)--(7,6);
		\foreach \Point in {(1,2), (3,4), (6,2), (7,3)}{
			\node at \Point {\textbullet};
		}
		\node[below] at (1,2) {$p_1$};
		\node[above] at (3,4) {$p_2$};
		\node[below] at (6,2) {$p_3$};
		\node[above] at (7,3) {$p_4$};
		
		\node[below] at (1,6) {$l_{p_1p_1'}$};
		\node[left] at (2,10) {$l_{p_2p_2'}$};
		\node[right] at (6,10) {$l_{p_3p_3'}$};
		\node[below] at (7,6) {$l_{p_4p_4'}$};
		
		\node at (4,7.33) {\textbullet};
		\node[left] at (4,7.33) {$p$};
		
		\node at (4,5) {$\updownarrow$};
		
		\end{tikzpicture}\\
		\caption{The 3-Chain}\label{Fig: 3Chain}
	\end{minipage}\hfill
	\begin{minipage}{0.45\textwidth}
		\centering
			\begin{tikzpicture}
		\draw[thick] (1,2)--(2,4)--(4,2)--(6,3)--(7,2);
		\draw (1,6)--(3,10);
		\draw (2,10)--(4,6);
		\draw[very thick] (3,6)--(5,10);
		\draw (4,10)--(6,6);
		\draw (5,6)--(7,10);
		\foreach \Point in {(1,2), (2,4), (4,2), (6,3), (7,2)}{
			\node at \Point {\textbullet};
		}
		\node[below] at (1,2) {$p_1$};
		\node[above] at (2,4) {$p_2$};
		\node[below] at (4,2) {$p_3$};
		\node[above] at (6,3) {$p_4$};
		\node[below] at (7,2) {$p_5$};
		
		\node[below] at (1,6) {$l_{p_1p_1'}$};
		\node[left] at (2,10) {$l_{p_2p_2'}$};
		\node[right] at (5,10) {$l_{p_3p_3'}$};
		\node[right] at (6,6) {$l_{p_4p_4'}$};
		\node[below] at (5,6) {$l_{p_5p_5'}$};
		
		\node[left] at (4,8) {$l$};
		
		\node at (4,5) {$\updownarrow$};
		
		\end{tikzpicture}\\
		\caption{The 4-Chain}\label{Fig: 4Chain}
	\end{minipage}
\end{figure}

We separate our approach depending on whether $n$ is odd or even. For $n$ odd we have an even number of lines in the configuration in $\R^3$ and thus we have a central point, see $p$ in Figure \ref{Fig: 3Chain}. For $n$ even we have an odd number of lines and thus we have a central line, see $l$ in Figure \ref{Fig: 4Chain}.

We simplify notation using the following definitions.

\pagebreak

\begin{definition}\label{Defn: NestedPointLineSets}
	Let $s$ and $t$ positive integers. Let $L$ be a set of lines and $M\subseteq L$. Let $P$ be a subset of the points of intersection of $L$. Then we denote by
	\begin{itemize}
		\item $P_t(M)$ points of intersection of lines in $M$ which have between $t$ and $2t$ lines of $L$ thought them.
		\item $L_s(M)$ lines of $L$ that contain between $s$ and $2s$ lines of $M$ through them.
		\item We use $L_s$ to mean $L_s(L)$.
		\item $L_s(P_t(M))$ are lines of $L$ that contain between $s$ and $2s$ points of $P_t(M)$.
	\end{itemize}
\end{definition}

As our chains get longer the number of variables necessary to parametrise their configurations will grow, with each new variable indicating a step out from the centre of the configuration. So for the 3-chain have two lines either side of the central point we will need two variables $t_1$ and $t_2$ to count them, we label these in the point set $P_{t_1}(L_{t_2})$ where such points contain $t_1$ lines of $L_{t_2}$ contained in them. The 5-chain would have 3 variables $t_1,t_2,t_3$ and points in $P_{t_1}(L_{t_2}(L_{t_3}))$ i.e. points that contain $t_1$ lines of $L_{t_2}(L_{t_3})$ which are lines that contain $t_2$ intersections with lines that have $t_3$ lines of $L$ through them.

For the $n$-chain one can see that $P_{t_1}(L_{t_2}(L_{t_3}(\cdots L_{t_{(n-1)/2}}(L_{t_{(n+1)/2}})\cdots))))$ is the relevant quantity. For ease of notation we will simplify this to $P_{t_1;t_2,\ldots,t_{(n+1)/2}}$.

We have a similar issue with even chains. When we consider the 4-chain requires three variables $t_1,t_2$ and $t_3$ with each representing a line stepping out from the central line (including the central line itself). We can characterise our central lines as belonging to the set $L_{t_1}(P_{t_2}(L_{t_3}))$ indicating lines that have $t_1$ points where there are $t_2$ lines of $L_{t_3}$ passing through them. Again we simplify this to $L_{t_1,t_2;\ldots,t_{(n+2)/2}}$ for the $n$-chain.

We claim that we can bound the energy $\E_n(P)$ by

\begin{equation}\label{Eq: EnergyEstimate}
 |\mathbb{E}_n(P)| \leq \begin{cases*}
\ds\log^{(n-1)/2}|L|\sum_{t_1,t_2,\ldots,t_{(n+1)/2}}|P_{t_1;t_2,\ldots,t_{(n+1)/2}}|(t_1\cdots t_{(n+1)/2})^2 & \text{if } n \text{ is odd},\\
\ds\log^{n/2}|L|\sum_{t_1,t_2,\ldots,t_{(n+2)/2}}|L_{t_1,t_2;\ldots,t_{(n+2)/2}}|(t_1\cdots t_{(n+2)/2})^2 & \text{if } n \text{ is even}.
\end{cases*}
\end{equation}

We will demonstrate the initial iteration in the case $n=3,4$, the further cases follow from further iteration of the argument.

\subsection*{3-Chain Setup}

In the case of the 3-chain, define $\nu(l)=\sum_{l' \in L}\delta_{l,l'}$, then define $\nu(p)=\sum_{p \in l}\nu(l)$. Then $\nu(p)$ counts the number of ways to start at $p$ and then step two lines out (with the first line containing $p$).

Counting pairs of such for each $p$ will give us the number of configurations of four lines centred at $p$ seen in Figure \ref{Fig: 3Chain}. So we can see that

$$|\E_3(P)| \leq \sum_{p\in P} \nu^2(p).$$

We then estimate $\nu(p)$. We observe that we can divide the lines of $L$ into sets $L_{t_2}$ where each line of $L_{t_2}$ has between $t_2$ and $2t_2$ intersections with other lines of $L$. We note that there are at most $2\log|L|$ such sets. Then $\nu(p) = \sum_{t_2}|L_{t_2}(p)|t_2$, where $|L_{t_2}(p)|$ are the number of lines of $L_{t_2}$ which pass through $p$. Cauchy-Schwarz tells us that,

$$ \nu^2(p) \leq 2\log|L|\sum_{t_2}|L_{t_2}(p)|^2t_2^2. $$

Recall that $P_{t_1}(L_{t_2})$ are the points that have between $t_1$ and $2t_1$ lines of $L_{t_2}$ through them we have,

$$\sum_{p\in P} \nu^2(p) \lesssim 2\log|L| \sum_{t_1}\sum_{t_2}|P_{t_1}(L_{t_2})|t_1^2t_2^2.$$

This verifies Claim (\ref{Eq: EnergyEstimate}) for $n=3$. We note that the support of both sums is $\sim \log|L|$, so for the 3-chain it suffices to establish the bound $|P_{t_1}(L_{t_2})| \lesssim |L|^{5/2}\log^{12}|L|$.

\subsection*{4-Chain Setup}

In the case of the 4-chain we again define $\nu(l)=\sum_{l'\in L} \delta_{l,l'}$ and then iterate this to set up $\nu_2(l)=\sum_{l' \in L}\nu(l')\delta_{l,l'}$. So $\nu_2$ gives pairs $(l',l'')$ where $l'$ intersects $l$ and $l''$ intersects $l'$.

We can thus see that counting pairs of these $\nu_2$-pairs i.e. $((l_1',l_1''),(l_2',l_2''))$ with $l_1'$ and $l_2'$ intersecting a shared line $l$, counts the types of arrangements required to count 4-chains. See Figure \ref{Fig: 4Chain}. So we have that

$$|\E_4(P)| \leq \sum_{l\in L(P)}\nu_2^2(l).$$

We thus estimate $\nu_2(l)$. Let $L_{t_3}$ be the lines of $L(P)$ that intersect between $t_3$ and $2t_3$ lines of $L(P)$ and let $P_{t_2}(L_{t_3})(l)$ be points on $l$ which have between $t_2$ and $2t_2$ lines of $L_{t_3}$ through them. We can then see that $\nu_2(l) = \sum_{t_2,t_3}|P_{t_2}(L_{t_3})(l)|t_2t_3$. As the support of $t_2$ and $t_3$ are both $\sim \log|L|$ we have

$$ \nu_2(l)^2 \lesssim \log^2|L| \sum_{t_2,t_3} |P_{t_2}(L_{t_3})(l)|^2t_2^2t_3^2. $$

Partitioning for a third time so that we have lines in $L_{t_1}(P_{t_2}(L_{t_3}))$, which we recall from Definition \ref{Defn: NestedPointLineSets} contain between $t_1$ and $2t_1$ points from $P_{t_2}(L_{t_3})$ we have that

$$|\E_4(P)| \lesssim \log^2|L|\sum_{t_1,t_2,t_3}|L_{t_1}(P_{t_2}(L_{t_3}))|t_1^2t_2^2t_3^2,$$

which verifies Claim (\ref{Eq: EnergyEstimate}) for $n=4$. 

To show Claim (\ref{Eq: EnergyEstimate}) in full generality one defines $\nu_k(p)$ and $\nu_k(l)$ iteratively (with $\nu_1=\nu$ defined above) as $\nu_{k+1}(p)=\sum_{p \in l} \nu_k(l)$ and $\nu_{k+1}(l) = \sum_{l'\in L} \nu_k(l)\delta_{l,l'}$ respectively. One then and preforms the same analysis as above.

To bound $|P_{t_1;\ldots, t_n}|$ and $|L_{t_1,t_2;\ldots,t_n}|$ we rely on the following key lemma, an iteration of Corollary \ref{Coro: kRichLines}.

We wish to iterate on lines containing many intersections with the previous set, for this we introduce distinct notation.

\begin{definition}\label{Defn: IteratedRichLines}
	Let $\mathcal{L}_{t_n}$ to be lines of $L$ that contain $\geq t_n$ intersections with the other lines of $L$. Define $\mathcal{L}_{t_1,t_2,\ldots,t_n}$ iteratively as the lines of $L$ that contain at least $t_1$ lines of $\mathcal{L}_{t_2,\ldots,t_n}$.
\end{definition}

We note that $\mathcal{L}_{t_1,\ldots, t_n}$ differs from $L_{t_1,t_2;\ldots, t_n}$ with the former being lines that have many intersections with $\mathcal{L}_{t_1,\ldots, t_{n-1}}$ and $L_{t_1,t_2;\ldots, t_n}$ being lines that have $\sim t_1$ intersections with $\sim t_2$ lines from $\mathcal{L}_{t_3,\ldots, t_{n}}$, as using Definitions \ref{Defn: IteratedRichLines} and \ref{Defn: NestedPointLineSets} one can see
$$L_{t_3}(L_{t_4}(\cdots L_{t_{(n-1)/2}}(L_{t_{(n+1)/2}})\cdots))) \subseteq \mathcal{L}_{t_3,\ldots, t_{n}}.$$

We will use $\mathcal{L}_{t_1,t_2,\ldots,t_n}$ going forward as this simpler iteration is much easier to work with and the gain from Lemma \ref{Lem: IterativeLines} below is strong enough to mitigate any losses in this simplification.

\begin{lemma}\label{Lem: IterativeLines}
	Let $L$ be a line set in $\R^3$ with at most $|L|^{1/2}$ lines in any regulus or plane and at most $|L|^{1/2}$ lines concurrent. Define $\mathcal{L}_{t_n}$ to be lines of $L$ that contain $\geq t_n$ intersections with the other lines of $L$. Define $\mathcal{L}_{t_1,t_2,\ldots,t_n}$ iteratively as the lines of $L$ that contain at least $t_1$ intersections with lines of $\mathcal{L}_{t_2,\ldots,t_n}$.
	
	We partition the $t_i$ into two sets depending on its value relative to $|L|^{1/2}$. Let $\{t_{i_a}\}^j_{a=1}$ be the $t_i$ such that $t_{i_a}\geq |L|^{1/2}$ and $\{t_{i_b}\}_{b=1}^k$ be the $t_i$ such that $t_i < |L|^{1/2}$. Note that $j+k=n$. Then
	
	$$ |\mathcal{L}_{t_1,\ldots,t_n}| \lesssim \frac{|L||L|^{j/2}|L|^k\log^{2k+3j}|L|}{\prod_{a=1}^jt_{i_a}\prod_{b=1}^k t^2_{i_b}}. $$
\end{lemma}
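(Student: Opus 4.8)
The plan is to prove Lemma \ref{Lem: IterativeLines} by induction on $n$, the depth of the iteration, using Corollary \ref{Coro: kRichLines} as the engine at each step. For $n=1$ the statement is exactly (a dyadic version of) Corollary \ref{Coro: kRichLines} applied with $s=|L|^{1/2}$: if $t_1\geq |L|^{1/2}$ one keeps the first term $|L|s^2\log^2|L|/t_1^2 = |L|^2\log^2|L|/t_1^2$, which matches the claimed bound with $j=1,k=0$; if $t_1<|L|^{1/2}$ one is in the regime where the linear term $|L|s\log^3|L|/t_1$ dominates and in fact the trivial bound plus a dyadic refinement gives the $|L|^{3/2}\log^3|L|/t_1^2$ shape claimed for $j=0,k=1$ (one pays an extra $t_1 < |L|^{1/2}$ to convert $1/t_1$ into $1/t_1^2$, absorbing the loss into $|L|^{1/2}$). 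So the base case is a bookkeeping exercise in which of the two terms of Corollary \ref{Coro: kRichLines} wins, recorded by the $t_{i_a}$ versus $t_{i_b}$ dichotomy.

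For the inductive step I would treat $\mathcal{L}_{t_2,\ldots,t_n}$ as a sub-collection $M\subseteq L$ whose cardinality is controlled by the inductive hypothesis, and then count $\mathcal{L}_{t_1,\ldots,t_n}=\mathcal{L}_{t_1}(M)$, the lines of $L$ meeting at least $t_1$ lines of $M$. The key point is that $M$ still lives in $\R^3$ with at most $|L|^{1/2}$ lines in any plane or regulus, so Corollary \ref{Coro: kRichLines} applies to $M$ \emph{with the same $s=|L|^{1/2}$} — this is exactly why Theorem \ref{Thm: GenRichLines} was stated with a free parameter $s$ rather than $|M|^{1/2}$, and why it "allows one to iterate on nested subsets." Applying it gives $|\mathcal{L}_{t_1}(M)| \lesssim |M| s^2 \log^2|L| / t_1^2 + |M| s \log^3 |L| / t_1$. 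Substituting the inductive bound for $|M|$ and again splitting on whether $t_1\geq |L|^{1/2}$ (keep the quadratic term, gain a factor $|L|^{1/2}$ in the numerator and $1/t_1^2$, charge $3$ logs) or $t_1<|L|^{1/2}$ (the linear term wins, gain $|L|$ in the numerator and $1/t_1$, which we upgrade to $1/t_1^2$ at the cost of $t_1<|L|^{1/2}$, charge $2$ logs) reproduces the claimed product formula with the exponents $|L|^{j/2}|L|^k$ and $\log^{2k+3j}|L|$ incremented correctly.

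The main obstacle — and the step deserving real care — is the log-exponent bookkeeping and making sure the two regimes glue together cleanly. Each application of Corollary \ref{Coro: kRichLines} introduces either $\log^2$ or $\log^3$ depending on which term survives, and one must check that summing these over the $n$ iterations yields exactly $2k+3j$; in particular a $t_i$ in the "small" class $t_{i_b}<|L|^{1/2}$ must be shown to always fall in the linear-term regime (contributing $\log^2$ and an extra $|L|^{1/2}$, which combine to give the $|L|^k$ with only $2k$ logs), while a "large" $t_{i_a}$ always triggers the quadratic term with $\log^3$ and $|L|^{1/2}$, giving $|L|^{j/2}$ with $3j$ logs. A subtlety is that the dichotomy is on the value of each $t_i$ individually and not on the relative sizes of $|M|$ and $t_i$, so one should verify that when $t_i \geq |L|^{1/2} \geq |M|^{1/2}$ the hypothesis $s\geq |M|^{1/2}$ needed by Corollary \ref{Coro: kRichLines} (with $s=|L|^{1/2}$) is still satisfied and that the quadratic term genuinely dominates at that threshold; this is where the uniform choice $s=|L|^{1/2}$ throughout, rather than shrinking $s$ with $M$, is essential and must be stated explicitly. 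Once that is pinned down, the rest is a routine induction.
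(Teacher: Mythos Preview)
Your plan---induction on $n$, applying Corollary~\ref{Coro: kRichLines} at each step with the fixed parameter $s=|L|^{1/2}$---is exactly the paper's approach. The paper's proof is three lines: apply Corollary~\ref{Coro: kRichLines} to $M=\mathcal{L}_{t_2,\ldots,t_n}$ to get
\[
|\mathcal{L}_{t_1,\ldots,t_n}|\ \lesssim\ \frac{|M|\,|L|\log^2|L|}{t_1^{2}}\;+\;\frac{|M|\,|L|^{1/2}\log^3|L|}{t_1},
\]
keep the dominant summand, and iterate.

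However, your case analysis is inverted throughout. Compare the two summands: their ratio is $|L|^{1/2}/(t_1\log|L|)$, so the \emph{quadratic} term $|M|\,|L|\log^2|L|/t_1^{2}$ dominates precisely when $t_1<|L|^{1/2}$ (the ``small'' class $t_{i_b}$), and the \emph{linear} term $|M|\,|L|^{1/2}\log^3|L|/t_1$ dominates when $t_1\ge |L|^{1/2}$ (the ``large'' class $t_{i_a}$). Thus each small $t_{i_b}$ contributes a factor $|L|\log^2|L|/t_{i_b}^{2}$ (hence $|L|^k$, $\log^{2k}$, and $t_{i_b}^{2}$ in the denominator), and each large $t_{i_a}$ contributes $|L|^{1/2}\log^3|L|/t_{i_a}$ (hence $|L|^{j/2}$, $\log^{3j}$, and $t_{i_a}^{1}$). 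Your description has these swapped, and the manoeuvre of ``upgrading $1/t_1$ to $1/t_1^{2}$ at the cost of $t_1<|L|^{1/2}$'' is an artefact of that swap---in the correct accounting no such upgrade is needed, since the small $t_i$ already sit in the quadratic term. With the regimes put the right way round, the bookkeeping is immediate and matches the claimed exponents directly.
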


%
%
%
%

\begin{proof}
	We recall that Corollary \ref{Coro: kRichLines} tells us that
	
	$$ |\mathcal{L}_{t_1,t_2,\ldots,t_n}| \lesssim \frac{|\mathcal{L}_{t_2,\ldots,t_n}||L|\log^2|L|}{t_1^2} + \frac{|\mathcal{L}_{t_2,\ldots,t_n}||L|^{1/2}\log^3|L|}{t_1}. $$
	
	We see that the first term dominates if $t_1\leq |L|^{1/2}$ and the second term dominates if $t_1\geq |L|^{1/2}$. Taking the dominant term and repeating the process leads to the stated inequality. Note that at the final step you use
	
	$$|\mathcal{L}_{t_n}| \lesssim \frac{|L|^2 \log^2|L|}{t_n^2} + \frac{|L|^{3/2}\log^3|L|}{t_n},$$
	
	which gives the factor of $|L|^1$ present in the final inequality.
\end{proof}

We can now prove our bound on $n$-chains. We will first deal with $n$ odd and then derive the $n$ even result via a simple application of Cauchy-Schwarz.

\subsection{Proof of Theorem \ref{Thm: GenChainBound} for $n$ odd}~

By (\ref{Eq: EnergyEstimate}) it suffices to establish the estimate

\begin{equation}\label{Eq: OddCaseBound}
 |P_{t_1;t_2,\ldots,t_{(n+1)/2}}|(t_1\cdots t_{(n+1)/2})^2 \lesssim |L|^{(n+2)/2}\log^{6(n-1)}|L|.
\end{equation}

To do this we will combine the original inequality of Guth and Katz with the iterative bound in Lemma \ref{Lem: IterativeLines}.

For the first bound we note that points in $P_{t_1;t_2,\ldots,t_{(n+1)/2}}$ are a subset of $P_{t_1}(L)$ and thus we can apply the Guth-Katz bound directly to gain

\begin{equation}\label{Eq: GuthKatzEstimate}
	|P_{t_1;t_2,\ldots,t_{(n+1)/2}}|(t_1\cdots t_{(n+1)/2})^2 \lesssim |L|^{3/2}(t_2\cdots t_{(n+1)/2})^2,
\end{equation}

removing the factor of $t_1^2$. Playing (\ref{Eq: GuthKatzEstimate}) off against other bounds will be key to this proof. We also note that if $t_i\lesssim |L|^{1/2}$ uniformly that this suffices as

$$ |P_{t_1;t_2,\ldots,t_{(n+1)/2}}|(t_1\cdots t_{(n+1)/2})^2 \lesssim |L|^{3/2}(|L|^{1/2})^{(n-1)} \leq |L|^{(n+2)/2}.$$

We begin this more involved case by recalling that points in $P_{t_1;t_2,\ldots,t_{(n+1)/2}}$ are a subset of those with $\sim t_1$ lines of $\mathcal{L}_{t_2,\ldots,t_{(n+1)/2}}$ through them. So we apply Corollary \ref{Coro: tRichPoints} with the line set $\mathcal{L}_{t_2,\ldots,t_{(n+1)/2}}$. As $\mathcal{L}_{t_2,\ldots,t_{(n+1)/2}}$ is a subset of $L(P)$ we see that there are at most $s=|L|^{1/2}$ lines in any plane or regulus, thus

\begin{equation}\label{Eq: OddChainGuthKatz}
 |P_{t_1;t_2,\ldots,t_{(n+1)/2}}| \lesssim \frac{|\mathcal{L}_{t_2,\ldots,t_{(n+1)/2}}|^{3/2}}{t_1^2} + \frac{|\mathcal{L}_{t_2,\ldots,t_{(n+1)/2}}||L|^{1/2}}{t_1^3} + \frac{|\mathcal{L}_{t_2,\ldots,t_{(n+1)/2}}|}{t_1}.
\end{equation}

We will consider each term dominating as a separate case.

\underline{Case 1:} The term $\dfrac{|\mathcal{L}_{t_2,\ldots,t_{(n+1)/2}}|^{3/2}}{t_1^2}$ dominates (\ref{Eq: OddChainGuthKatz}).

Then we have that 

$$ |P_{t_1;t_2,\ldots,t_{(n+1)/2}}|(t_1\cdots t_{(n+1)/2})^2 \lesssim |\mathcal{L}_{t_2,\ldots,t_{(n+1)/2}}|^{3/2}(t_2\cdots t_{(n+1)/2})^2, $$

Using Lemma \ref{Lem: IterativeLines} we have that

\begin{align*}
|P_{t_1;t_2,\ldots,t_{(n+1)/2}}|(t_1\cdots t_{(n+1)/2})^2 &\lesssim \left(\frac{|L||L|^{j/2}|L|^k\log^{2k+3j}|L|}{\prod_{a=1}^jt_{i_a}\prod_{b=1}^k t^2_{i_b}}\right)^{3/2}(t_2\cdots t_{(n+1)/2})^2\\
 &= \frac{|L|^{3/2}|L|^{3j/4}|L|^{3k/2}}{\prod_{b=1}^k t_{i_b}}\left(\prod_{a=1}^jt_{i_a}\right)^{1/2} \cdot \log^{3k+(9/2)j}|L|.
\end{align*}

We compare this to the Guth-Katz bound (\ref{Eq: GuthKatzEstimate}).

\begin{align*}
|P_{t_1;t_2,\ldots,t_{(n+1)/2}}|(t_1\cdots t_{(n+1)/2})^2 &\lesssim |L|^{3/2}(t_2\cdots t_{(n+1)/2})^2\\
&\lesssim |L|^{3/2}\left(\prod_{b=1}^k t_{i_b}\right)^{1/2}\left(|L|^{k/2}\right)^{3/2}\left(\prod_{a=1}^jt_{i_a}\right)^2,
\end{align*} 

using that by definition $t_{i_b}\lesssim |L|^{1/2}$ for all $b$ (see Lemma \ref{Lem: IterativeLines}). Combining these bounds gives as the worst case scenario that

\begin{align*}
\frac{|L|^{3/2}|L|^{3j/4}|L|^{3k/2}}{\prod_{b=1}^k t_{i_b}}\left(\prod_{a=1}^jt_{i_a}\right)^{1/2} \log^{3k+(9/2)j}|L|	&=|L|^{3/2}\left(\prod_{b=1}^k t_{i_b}\right)^{1/2}|L|^{3k/4}\left(\prod_{a=1}^jt_{i_a}\right)^2 \\
{|L|^{3j/4}|L|^{3k/4}} \log^{3k+(9/2)j}|L|	&=\left(\prod_{a=1}^jt_{i_a}\right)^{3/2}\left(\prod_{b=1}^k t_{i_b}\right)^{3/2}.
\end{align*}

Thus we have

$$ {|L|^{j/2}|L|^{k/2}} \log^{2k+3j}|L|	= (t_2\cdots t_{(n+1)/2}), $$

which gives

$$ |L|^{j+k}\log^{4k+6j}|L| = (t_2\cdots t_{(n+1)/2})^2. $$

Using this estimate in (\ref{Eq: GuthKatzEstimate}) gives

$$ |P_{t_1;t_2,\ldots,t_{(n+1)/2}}|(t_1\cdots t_{(n+1)/2})^2 \lesssim |L|^{3/2}{|L|^{j+k}} \log^{4k+6j}|L|. $$

We know that $j+k = \frac{(n-1)}{2}$, as $j$ and $k$ count the number of $t_i$ excluding $t_1$, thus we have that

$$ |P_{t_1;t_2,\ldots,t_{(n+1)/2}}|(t_1\cdots t_{(n+1)/2})^2 \lesssim |L|^{\frac{(n+2)}{2}}\log^{6(n-1)}|L|. $$

Which gives the result as in Case 1.

\underline{Case 2:} The term $\dfrac{|\mathcal{L}_{t_2,\ldots,t_{(n+1)/2}}||L|^{1/2}}{t_1^3}$ dominates (\ref{Eq: OddChainGuthKatz}).

We apply Lemma \ref{Lem: IterativeLines} and obtain

\begin{align*}
	|P_{t_1;t_2,\ldots,t_{(n+1)/2}}|(t_1\cdots t_{(n+1)/2})^2 &\lesssim \frac{|\mathcal{L}_{t_2,\ldots,t_{(n+1)/2}}||L|^{1/2}}{t_1}(t_2\cdots t_{(n+1)/2})^2\\
	&\lesssim \frac{|L|^{3/2}|L|^{j/2}|L|^k\log^{2k+3j}|L|}{t_1\prod_{a=1}^jt_{i_a}\prod_{b=1}^k t^2_{i_b}}\cdot (t_2\cdots t_{(n+1)/2})^2\\
	&\lesssim \frac{|L|^{3/2}|L|^{j/2}|L|^k\log^{2k+3j}|L|}{t_1}\prod_{a=1}^jt_{i_a}
\end{align*}

where the products over $a$ and $b$ range over $t_2 \ldots t_{(n+1)/2}$. We compare this estimate to the one from (\ref{Eq: GuthKatzEstimate}), which as above we have

$$ |P_{t_1;t_2,\ldots,t_{(n+1)/2}}|(t_1\cdots t_{(n+1)/2})^2 \lesssim |L|^{3/2}|L|^{k}\left(\prod_{a=1}^jt_{i_a}\right)^2. $$

In the worst case we thus have that

$$\frac{|L|^{3/2}|L|^{j/2}|L|^k\log^{2k+3j}|L|}{t_1}\prod_{a=1}^jt_{i_a} =|L|^{3/2}|L|^{k}\left(\prod_{a=1}^jt_{i_a}\right)^2, $$

which simplifies to give

$$ \prod_{a=1}^jt_{i_a} = \frac{|L|^{j/2}\log^{2k+3j}|L|}{t_1}.$$

Using this and that $1/t_1\lesssim 1$ in (\ref{Eq: GuthKatzEstimate}) gives

$$ |P_{t_1;t_2,\ldots,t_{(n+1)/2}}|(t_1\cdots t_{(n+1)/2})^2 \lesssim |L|^{3/2}|L|^{k+j}\log^{4k+6j}|L| \lesssim |L|^{\frac{(n+2)}{2}}\log^{6(n-1)}|L|.$$

Recall $j+k = \frac{(n-1)}{2}$ as the support of the products they count above is $t_2,\ldots,t_{\frac{(n+1)}{2}}$.

\underline{Case 3:} The term $\dfrac{|\mathcal{L}_{t_2,\ldots,t_{(n+1)/2}}|}{t_1}$ dominates (\ref{Eq: OddChainGuthKatz}).

We again apply Lemma \ref{Lem: IterativeLines} to $\mathcal{L}_{t_2,\ldots,t_{(n+1)/2}}$, this gives

\begin{align*}
|P_{t_1;t_2,\ldots,t_{(n+1)/2}}|(t_1\cdots t_{(n+1)/2})^2 &\lesssim |\mathcal{L}_{t_2,\ldots,t_{(n+1)/2}}|t_1(t_2\cdots t_{(n+1)/2})^2 \\
&\lesssim \left(\frac{|L||L|^{j/2}|L|^k\log^{2k+3j}|L|}{\prod_{a=1}^jt_{i_a}\prod_{b=1}^k t^2_{i_b}}\right)\cdot t_1(t_2\cdots t_{(n+1)/2})^2\\
&= |L||L|^{j/2}|L|^k\log^{2k+3j}(|L|)t_1\prod_{a=1}^jt_{i_a}\\
&\leq |L|^{3/2}|L|^{j/2}|L|^k\log^{2k+3j}(|L|)\prod_{a=1}^jt_{i_a}.
\end{align*}

where this final line uses that $t_1 \leq |L|^{1/2}$ as $\mathcal{L}_{t_2,\ldots,t_{(n+1)/2}}$ is a subset of the Guth-Katz lines $L$ where at most $|L|^{1/2}$ can pass through any point.

We compare this estimate to the one from (\ref{Eq: GuthKatzEstimate}), which we recall gives

\begin{align*}
	|P_{t_1;t_2,\ldots,t_{(n+1)/2}}|(t_1\cdots t_{(n+1)/2})^2 &\lesssim |L|^{3/2}(t_2\cdots t_{(n+1)/2})^2\\
	&\lesssim |L|^{3/2}|L|^{k}\left(\prod_{a=1}^jt_{i_a}\right)^2.
\end{align*} 

Thus similar to Case 2 in the worst case scenario one has 

$$ \prod_{a=1}^jt_{i_a} = |L|^{j/2}\log^{2k+3j}|L|.$$

Using this with estimate (\ref{Eq: GuthKatzEstimate}) then gives

$$ |P_{t_1;t_2,\ldots,t_{(n+1)/2}}|(t_1\cdots t_{(n+1)/2})^2 \lesssim |L|^{3/2}|L|^{k+j}\log^{4k+6j}|L| \lesssim |L|^{\frac{(n+2)}{2}}\log^{6(n-1)}|L|.$$

\subsection{Proof of Theorem \ref{Thm: GenChainBound} for $n$ even}~
%
%
%
%
%

%
We show that any non-trivial bound on the $n$-chain energy gives a non-trivial upper bound on the $k$-chain energy for $k\leq n$.
%
%
%
%
%
\begin{lemma}\label{Lem: LongChainToShortChain}
	Suppose that $n\geq 4$ is even, then
	$$ |\E_n(P)|^2 \lesssim |\E_{n-1}(P)||\E_{n+1}(P)|. $$
	Suppose that $n\geq 3$ is odd, then
	$$ |\E_n(P)|^2 \lesssim |\E_{n-2}(P)||\E_{n+2}(P)|. $$
\end{lemma}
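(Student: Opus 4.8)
The plan is to prove both inequalities by a single Cauchy--Schwarz argument, exploiting the fact that an $n$-chain energy element is a pair of $n$-chains sharing the same distance tuple, and that a chain of length $n$ decomposes along its $j$-th vertex into a chain of length $j$ and a chain of length $n-j$ overlapping in one point. Concretely, recall $\nu(z)$ counts realisations of the distance tuple $z=(z_1,\dots,z_n)$, so that $|\E_n(P)| = \sum_{z}\nu_n(z)^2$, where I write $\nu_n$ to emphasise the chain length. The key observation is a convolution-type identity: writing $z = (z', z'')$ where $z'=(z_1,\dots,z_m)$ and $z''=(z_{m+1},\dots,z_n)$, we have
$$\nu_n(z) = \sum_{p \in P} \nu_m^{\bullet}(z';p)\,\nu_{n-m}^{\bullet}(p;z''),$$
where $\nu^\bullet(\cdot\,;p)$ denotes the pinned count of chains whose terminal vertex is fixed to be $p$ (and similarly $\nu^\bullet(p;\cdot)$ with the initial vertex pinned at $p$). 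By reflection symmetry of chains these pinned counts are the same kind of object.

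First I would handle the even case $n\geq 4$. Split each $n$-chain at its middle vertex: with $n=2r$, write $z=(z',z'')$ with $z',z''$ each of length $r$. Then $|\E_n(P)|=\sum_{z',z''}\big(\sum_p \nu_r^\bullet(z';p)\nu_r^\bullet(p;z'')\big)^2$. Apply Cauchy--Schwarz in the variable $p$ and then reorganise the two resulting factors: one of them, after summing over $z''$, becomes (a pinned version of) the $\E_{n-1}$ energy — an $(r-1)$-chain glued to an $r$-chain at a common pinned endpoint, i.e. an $(2r-1)$-chain energy — and the other, after summing over $z'$, becomes the $\E_{n+1}$ energy. More carefully, one wants
$$|\E_n(P)|^2 = \Big(\sum_{z',z''}\sum_p \nu_r^\bullet(z';p)\nu_r^\bullet(p;z'')\Big)^2 \lesssim \Big(\sum_{z',z''}\sum_p \nu_r^\bullet(z';p)^2 \nu_{r-1}^\bullet(p;\,\cdot\,) \Big)\Big(\sum \cdots\Big),$$
with the matching done so that one factor telescopes to $|\E_{n-1}(P)|$ and the other to $|\E_{n+1}(P)|$; absorbing the pins correctly (using that $\sum_p (\text{pinned } r\text{-chain count at }p)^2 \le |\E_r(P)|$ and gluing identities of the form $\sum_z \nu_a^\bullet(z;p)\nu_b^\bullet(z;p) $ relate to $\E_{a+b}$-type sums) is exactly the bookkeeping that produces the asymmetric pair $(n-1,n+1)$ rather than $(r,r)$. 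The odd case $n\geq 3$ is the same argument but splitting at the central \emph{edge} rather than the central vertex: with $n=2r+1$, the chain decomposes as an $r$-chain, a single shared edge, and another $r$-chain, so the natural neighbours obtained after Cauchy--Schwarz are $\E_{2r-1}=\E_{n-2}$ and $\E_{2r+3}=\E_{n+2}$.

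The main obstacle I expect is the bookkeeping of the pins: making the Cauchy--Schwarz split so that the two factors genuinely close up into honest energies $\E_{n\mp 1}$ (resp. $\E_{n\mp 2}$) rather than some pinned hybrid that is merely bounded by them with a lossy extra factor. The clean way to force this is to choose the splitting vertex/edge and the Cauchy--Schwarz weighting asymmetrically from the start — pin one side's terminal vertex, leave the other free — so that reassembling one factor lengthens the chain by one (one extra free gluing) while the other shortens it by one (one fewer summation). Once that asymmetry is set up correctly the inequality falls out; everything else is routine manipulation of the sums $\sum_z \nu(z)^2$. A secondary point to be careful about is that all these pinned energies are trivially $\lesssim$ their unpinned counterparts and are nonnegative, so no degenerate-term issues arise, and the implied constants depend only on $n=O(1)$.
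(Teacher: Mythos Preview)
Your proposal has a genuine gap. The paper's argument is a one--line Cauchy--Schwarz, but it is carried out in the Guth--Katz line picture, not in the distance picture you use. Recall that an element of $\E_n(P)$ is a chain of $n+1$ lines in $\R^3$ with consecutive intersections. Writing $\omega_k(p)$ for the number of $k$-line chains whose first line passes through the intersection point $p$, one has the \emph{exact identity}
\[
|\E_n(P)| \;=\; \sum_{p}\,\omega_{k}(p)\,\omega_{\,n+1-k}(p),
\]
and Cauchy--Schwarz over the intersection points $p$ gives $|\E_n|^2 \le \bigl(\sum_p \omega_k^2\bigr)\bigl(\sum_p \omega_{n+1-k}^2\bigr)=|\E_{2k-1}|\,|\E_{2n-2k+1}|$, since two $k$-line chains sharing an endpoint glue to a $2k$-line chain. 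Taking $k=n/2$ for even $n$ yields $|\E_{n-1}|\,|\E_{n+1}|$; taking $k=(n-1)/2$ for odd $n$ yields $|\E_{n-2}|\,|\E_{n+2}|$.

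Your argument instead pins a single vertex $p\in P$ of the chain and applies Cauchy--Schwarz in that $p$. This does not close up into the energies you want. First, the inequality you actually write down,
\[
|\E_n(P)|\;\le\;\Bigl(\sum_{z'}\sum_p \nu_r^\bullet(z';p)^2\Bigr)\Bigl(\sum_{z''}\sum_q \nu_r^\bullet(q;z'')^2\Bigr),
\]
has both factors equal to a \emph{pinned} $r$-energy (pairs of $r$-chains with the same distance tuple \emph{and} the same terminal vertex), which is neither $|\E_{n-1}|$ nor $|\E_{n+1}|$. Second, even if you repair this by summing over the pair $(p,p')$ --- equivalently, over a shared Guth--Katz line rather than a shared point --- the resulting inequality is $|\E_n|^2\le |\E_{2r}|\,|\E_{2(n-r)}|$, which can only produce even index shifts. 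So for even $n$ you would obtain $|\E_n|^2\lesssim |\E_{n-2}|\,|\E_{n+2}|$ and for odd $n$ you would obtain $|\E_n|^2\lesssim |\E_{n-1}|\,|\E_{n+1}|$: precisely the parities swapped relative to the lemma. The ``asymmetric bookkeeping'' you allude to cannot fix this, because splitting at a chain vertex in $\R^2$ is the same as splitting at a line in $\R^3$; to get odd shifts you must split at an \emph{intersection point} in $\R^3$ (a rigid motion), which has no single-vertex analogue in the distance picture.
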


\begin{proof}We prove this in the even case, the odd case follows similarly.

We define $\omega_{k}(p)$ to be the number of ways to from a $k$-chain of lines starting with a line through $p$ (one can see this as the $k^{th}$ iteration of the $\nu_i(p)$ from the beginning of Section \ref{Sec: Proof}). Thus to form the $(n+1)$-chain of Guth-Katz lines we can break them up into finding pairs of chains of length $\frac{n-1}{2}$ and $\frac{n+1}{2}$ (with the central line shared, see Figure \ref{Fig: 4Chain}).

With this in mind we observe that
	
\begin{align*}
	|\E_n(P)| = \sum_{p\in P} \omega_{(n/2)}(p)\omega_{(n+2)}(p) &\leq \left(\sum_{p\in P} \omega^2_{n/2}\right)^{1/2}\left(\sum_{p\in P} \omega^2_{(n+2)/2}\right)^{1/2}\\
	 &\sim |\E_{n-1}(P)|^{1/2}|\E_{n+1}(P)|^{1/2},
\end{align*}
	
	where the last line notes that to find the $(n-1)$-energy you count chains of lines of length $n$ and thus as $n$ is even there is a central point (see Figure \ref{Fig: 3Chain}), so counting pairs of $n/2$ chains through this point will give the energy.
\end{proof}

We use Lemma \ref{Lem: LongChainToShortChain} to give us the odd chain bound. Indeed,

\begin{align*}
 |\E_n(P)| &\lesssim |\E_{n-1}(P)|^{1/2}|\E_{n+1}(P)|^{1/2} \\
 		&\lesssim \left(|L|^{(n+1)/2}\log^{\frac{13}{2}(n-2)}\right)^{1/2}	\left(|L|^{(n+3)/2}\log^{\frac{13n}{2}}\right)^{1/2}
 		&=|L|^{(n+2)/2}\log^{\frac{13}{2}(n-1)}|L|,
\end{align*}

which concludes the proof of Theorem \ref{Thm: GenChainBound}.

\bibliography{../../DiscreteGeometryReferences}{} 

\begin{thebibliography}{10}

\bibitem{BIT16}
Michael Bennett, Alexander Iosevich, and Krystal Taylor.
\newblock Finite chains inside thin subsets of $\mathbb{R}^d$.
\newblock {\em Analysis \& PDE}, 9(3):597--614, 2016.

\bibitem{B86}
Jean Bourgain.
\newblock A szemer{\'e}di type theorem for sets of positive density in
  $\mathbb{R}^k$.
\newblock {\em Israel Journal of Mathematics}, 54(3):307--316, 1986.

\bibitem{CLP16}
Vincent Chan, Izabella {\L}aba, and Malabika Pramanik.
\newblock Finite configurations in sparse sets.
\newblock {\em Journal d'Analyse Math{\'e}matique}, 128(1):289--335, 2016.

\bibitem{C84}
Fan~RK Chung.
\newblock The number of different distances determined by n points in the
  plane.
\newblock {\em Journal of Combinatorial Theory, Series A}, 36(3):342--354,
  1984.

\bibitem{CST92}
Fan~RK Chung, Endre Szemer{\'e}di, and William~T. Trotter.
\newblock The number of different distances determined by a set of points in
  the euclidean plane.
\newblock {\em Discrete \& Computational Geometry}, 7(1):1--11, 1992.

\bibitem{dZ16}
Frank de~Zeeuw.
\newblock A short proof of rudnev's point-plane incidence bound.
\newblock {\em arXiv preprint arXiv:1612.02719}, 2016.

\bibitem{ES11}
Gy{\"o}rgy Elekes and Micha Sharir.
\newblock Incidences in three dimensions and distinct distances in the plane.
\newblock {\em Combinatorics, Probability and Computing}, 20(4):571--608, 2011.

\bibitem{E46}
Paul Erd{\"o}s.
\newblock On sets of distances of n points.
\newblock {\em The American Mathematical Monthly}, 53(5):248--250, 1946.

\bibitem{F85}
Kenneth~J Falconer.
\newblock On the hausdorff dimensions of distance sets.
\newblock {\em Mathematika}, 32(2):206--212, 1985.

\bibitem{FM86}
Kenneth~J Falconer and John~M Marstrand.
\newblock Plane sets with positive density at infinity contain all large
  distances.
\newblock {\em Bulletin of the London Mathematical Society}, 18(5):471--474,
  1986.

\bibitem{FK19}
N{\'o}ra Frankl and Andrey Kupavskii.
\newblock Almost sharp bounds on the number of discrete chains in the plane.
\newblock In {\em 36th International Symposium on Computational Geometry (SoCG
  2020)}. Schloss Dagstuhl-Leibniz-Zentrum f{\"u}r Informatik, 2020.

\bibitem{FKW90}
Hillel F{\"u}rstenberg, Yitzchak Katznelson, and Benjamin Weiss.
\newblock Ergodic theory and configurations in sets of positive density.
\newblock In {\em Mathematics of Ramsey theory}, pages 184--198. Springer,
  1990.

\bibitem{GIS11}
Julia Garibaldi, Alex Iosevich, and Steven Senger.
\newblock {\em The Erdos distance problem}, volume~56.
\newblock American Mathematical Soc., 2011.

\bibitem{G16Book}
Larry Guth.
\newblock {\em Polynomial methods in combinatorics}, volume~64.
\newblock American Mathematical Soc., 2016.

\bibitem{GIOW19}
Larry Guth, Alex Iosevich, Yumeng Ou, and Hong Wang.
\newblock On falconer’s distance set problem in the plane.
\newblock {\em Inventiones mathematicae}, 219(3):779--830, 2020.

\bibitem{GK15}
Larry Guth and Nets~Hawk Katz.
\newblock On the erd{\H{o}}s distinct distances problem in the plane.
\newblock {\em Annals of mathematics}, pages 155--190, 2015.

\bibitem{IP18}
A~Iosevich and J~Passant.
\newblock Finite point configurations in the plane, rigidity and erd{\H{o}}s
  problems.
\newblock {\em Proceedings of the Steklov Institute of Mathematics},
  303(1):129--139, 2018.

\bibitem{IJM21}
Alex Iosevich, Gail Jardine, and Brian McDonald.
\newblock Cycles of arbitrary length in distance graphs on {$\mathbb{F}_q$}.
\newblock {\em arXiv preprint arXiv:2101.00748}, 2021.

\bibitem{IP19}
Alex Iosevich and Hans Parshall.
\newblock Embedding distance graphs in finite field vector spaces.
\newblock {\em J. Korean Math. Soc.}, 56(6):1515--1528, 2019.

\bibitem{LM20}
Neil Lyall and Akos Magyar.
\newblock Distance graphs and sets of positive upper density in $\mathbb{R}^d$.
\newblock {\em Analysis \& PDE}, 13(3):685--700, 2020.

\bibitem{M52}
Leo Moser.
\newblock On the different distances determined by n points.
\newblock {\em The American Mathematical Monthly}, 59(2):85--91, 1952.

\bibitem{OT20}
Yumeng Ou and Krystal Taylor.
\newblock Finite point configurations and the regular value theorem in a
  fractal setting.
\newblock {\em arXiv preprint arXiv:2005.12233}, 2020.

\bibitem{PSS19}
Eyvindur~Ari Palsson, Steven Senger, and Adam Sheffer.
\newblock On the number of discrete chains.
\newblock {\em arXiv preprint arXiv:1902.08259}, 2019.

\bibitem{R12}
Misha Rudnev.
\newblock On the number of classes of triangles determined by $ n $ points in
  $\mathbb{R}^{2}$.
\newblock {\em arXiv preprint arXiv:1205.4865}, 2012.

\bibitem{R19}
Misha Rudnev.
\newblock Note on the number of hinges defined by a point set in
  $\mathbb{R}^2$.
\newblock {\em Combinatorica}, 2020.

\bibitem{SS18}
Micha Sharir and Noam Solomon.
\newblock Incidences between points and lines on two-and three-dimensional
  varieties.
\newblock {\em Discrete \& Computational Geometry}, 59(1):88--130, 2018.

\bibitem{ST07}
Jozsef Solymosi and Gabor Tardos.
\newblock On the number of k-rich transformations.
\newblock In {\em Proceedings of the twenty-third annual symposium on
  Computational geometry}, pages 227--231, 2007.

\bibitem{ST01}
J{\'o}zsef Solymosi and Cs~D T{\'o}th.
\newblock Distinct distances in the plane.
\newblock {\em Discrete \& Computational Geometry}, 25(4):629--634, 2001.

\bibitem{S97}
L{\'a}szl{\'o}~A Sz{\'e}kely.
\newblock Crossing numbers and hard erd{\H{o}}s problems in discrete geometry.
\newblock {\em Combinatorics, Probability and Computing}, 6(3):353--358, 1997.

\bibitem{T03}
G{\'a}bor Tardos.
\newblock On distinct sums and distinct distances.
\newblock {\em Advances in Mathematics}, 180(1):275--289, 2003.

\bibitem{Z06}
Tamar Ziegler.
\newblock Nilfactors of $\mathbb{R}^m$ and configurations in sets of positive
  upper density in $\mathbb{R}^m$.
\newblock {\em Journal d’Analyse Math{\'e}matique}, 99(1):249--266, 2006.

\end{thebibliography}

\bibliographystyle{plain}

\renewcommand{\thesection}{\Alph{section}}
\setcounter{section}{0}

\section{Proof of Theorem \ref{Thm: GenRichLines}}\label{App: GenRichLines}

This proof exactly follows the proof of Rudnev with one minor deviation that I will point out. We require Theorem \ref{Thm: GuthKatz} along with the following two results.

\begin{theorem}(De Zeeuw \cite{dZ16})\label{Thm: LineLine}
	Let $L, L'$ be two sets of lines in $\R^3$, with $|L'| \leq |L|$ and at most $r$ of lines from $L$ lying in a plane or regulus. If $P$ is a set of all points where two distinct lines $l,l'$ with $l\in L$ and $l'\in L$ meet, then
	
	$$|P| \lesssim |L|\sqrt{|L'|} + r|L'|.$$
\end{theorem}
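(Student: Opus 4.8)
\medskip
\noindent\emph{Proof proposal.} The plan is to follow the usual blueprint for $\R^3$ incidence bounds of this shape: the two summands correspond to the \emph{generic} part of the count, handled by the Guth--Katz machinery of Theorem~\ref{Thm: GuthKatz} (equivalently Corollary~\ref{Coro: tRichPoints}), and the part in which the incidences are trapped inside a common plane or regulus, which contributes the error term $r|L'|$.

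I would begin by normalising the threshold: set $s:=\max\{r,\,|L|^{1/2}\}$. Since $|L'|\le|L|$ one has $|L|^{1/2}|L'|\le |L|\sqrt{|L'|}$, so $s|L'|\lesssim r|L'|+|L|\sqrt{|L'|}$, and it therefore suffices to prove $|P|\lesssim|L|\sqrt{|L'|}+s|L'|$ under the stronger hypothesis $s\ge|L|^{1/2}$. In that regime the hypotheses of Corollary~\ref{Coro: tRichPoints} are in force, so the set of points lying on at least $t$ lines of $L$ has size $O(|L|^{3/2}/t^2+s|L|/t^3+|L|/t)$.

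Next I would split $P$ according to the multiplicities $d(p)$ and $d'(p)$, the numbers of lines of $L$ and of $L'$ through $p$, decomposing each dyadically. The ``rich'' points, meaning those with $d(p)\gtrsim\sqrt{|L'|}$ or $d'(p)\gtrsim\sqrt{|L|}$, are cheap and need no geometry: on a fixed line $\ell'\in L'$, a point with $\gtrsim t$ transversal lines of $L$ through it uses up $\gtrsim t$ of the at most $|L|$ lines of $L$ that meet $\ell'$ (distinct lines of $L$ meet $\ell'$ in distinct points), so $\ell'$ carries only $O(|L|/t)$ such points, and summing over $L'$ (every point of $P$ lies on some line of $L'$) and over the dyadic scales $t\gtrsim\sqrt{|L'|}$ gives a geometric series bounded by $O(|L||L'|/\sqrt{|L'|})=O(|L|\sqrt{|L'|})$; the symmetric computation on lines of $L$ disposes of the points with $d(p)\gtrsim\sqrt{|L'|}$. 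What is left are the points lying on only $O(\sqrt{|L'|})$ lines of $L$ and $O(\sqrt{|L|})$ lines of $L'$ — in particular all the points incident to just one line of each set — and these are exactly the points the rich-point bound says nothing useful about. For such a point $p=\ell\cap\ell'$ the two lines are coplanar; I would group these remaining points by the plane (or, for the near-pencils that produce a doubly ruled surface, the regulus) spanned by the relevant pair $\ell,\ell'$, bound the number of points in one fixed plane or regulus by counting the lines of $L'$ in it against the at most $r$ lines of $L$ it may contain, and sum over the planes and reguli to extract the $O(r|L'|)$ contribution. Combining the three contributions yields the stated bound.

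The hard part is this last step. Once the Guth--Katz rich-point machinery is exhausted, one is left with a genuinely low-degree incidence problem — potentially $|L||L'|$ intersection points, each on one line of each set — and the only thing preventing this trivial bound is the hypothesis bounding coplanar and co-regular families of lines of $L$. Making the coplanarity of an intersecting pair $\ell\in L,\ \ell'\in L'$ interact with ``no $r$ lines of $L$ in a plane or regulus'' so that the leftover points total exactly $O(r|L'|)$, rather than the trivial $O(|L||L'|)$, is the delicate bookkeeping at the heart of de Zeeuw's argument, and is where the regulus hypothesis (as opposed to a plane hypothesis alone) is genuinely needed.
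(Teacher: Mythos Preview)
The paper does not prove this theorem; it is quoted from De~Zeeuw~\cite{dZ16} as a black box and only invoked in Case~1 of the proof of Theorem~\ref{Thm: GenRichLines} (where the $O(1)$-concurrency hypothesis on $L'$ that the paper's statement suppresses, but mentions in the body of Section~3, is in force). So there is no in-paper argument to compare against.

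On its own merits, your sketch handles the easy part and then stops at the real content. The double-counting along lines of $L'$ for points with $d(p)\gtrsim\sqrt{|L'|}$ is fine, but you yourself flag the leftover ``poor'' points as the whole difficulty, and your plan for them --- ``group by the plane spanned by $\ell,\ell'$'' --- does not work as written: every intersecting pair spans its own plane, so this is no reduction at all, and the hypothesis bounds the number of lines of $L$ in a \emph{fixed} plane or regulus, not the number of such planes. Nothing in your outline ever brings that hypothesis to bear in a way that could produce the $r|L'|$ term rather than the trivial $|L||L'|$.

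De~Zeeuw's actual proof is a short polynomial-method argument, not a rich/poor dyadic decomposition. One chooses a nonzero polynomial $Q$ of degree $D\lesssim\sqrt{|L'|}$ vanishing on every line of $L'$ (parameter counting), so that $P\subset Z(Q)$. Lines of $L$ not contained in $Z(Q)$ meet it in at most $D$ points each, contributing $\lesssim |L|\sqrt{|L'|}$. Lines of $L$ contained in $Z(Q)$ are handled by factoring $Q$: plane and regulus components each carry at most $r$ lines of $L$ by hypothesis, so along any line of $L'$ lying in such a component there are at most $r$ intersection points with those lines, giving $\lesssim r|L'|$ in total; the remaining irreducible components contain too few lines to matter. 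This is also where the bounded-concurrency condition on $L'$ enters.
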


\begin{theorem}(Sharir-Solomon \cite{SS18})\label{Thm: SharirSolomom}
Let $P$ be a set of points and $L$ a set of lines in $\R^3$, lying in a degree $D\geq 2$ polynomial surface, which does not contain linear factors. Suppose, at most $s\leq D$ lines\footnote{As Rudnev \cite{R19} points out, the restriction $s\leq D$ is merely a consequence of the Bézout theorem; certainly $s$ can be replaced by a large quantity.} are contained in any plane. The number $I(P,L)$ of incidences between $P$ and $L$ satisfies the bound
$$ I(P,L) \lesssim |P|^{1/2}|L|^{1/2}d^{1/2} + |P|^{2/3}d^{2/3}s^{1/3} + |P| + |L|. $$

\end{theorem}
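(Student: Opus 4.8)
The plan is to prove this purely geometric incidence bound by exploiting the constraint that both the points and the lines lie on the fixed degree-$D$ surface $V = Z(f)$, which we may take square-free and, by hypothesis, free of linear factors. First I would reduce to irreducible $V$: factor $f$ into its $O(D)$ irreducible components, charge each line of $L$ to one component containing it and each point of $P$ to one component containing it, and assemble the resulting estimates additively (the component degrees sum to $D$). The base input is the planar Szemer\'edi--Trotter theorem, $I(P,L)\lesssim |P|^{2/3}|L|^{2/3}+|P|+|L|$, together with the standing hypothesis that at most $s$ lines of $L$ lie in any single plane.

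The geometric heart of the argument is a smooth/singular dichotomy on $V$. At a smooth point $p\in V$, every line of $V$ through $p$ is tangent to $V$ there: differentiating $f(p+tv)\equiv 0$ gives $\nabla f(p)\cdot v = 0$, so the line lies in the single tangent plane $T_pV$. Since at most $s$ lines of $L$ lie in any plane, each smooth point is incident to at most $s$ lines of $L$, and this is precisely what will produce the $s^{1/3}$ dependence in the second term. The singular points lie on $\Sigma = V\cap Z(\nabla f)$, a variety of degree $O(D^2)$. To bound how many lines $V$ itself can carry, I would invoke the classical flecnode polynomial $\mathrm{Fl}(f)$ of degree $\le 11D$: every line contained in $V$ lies on $V\cap Z(\mathrm{Fl}(f))$, a curve of degree $O(D^2)$ containing $O(D^2)$ lines, unless $\mathrm{Fl}(f)$ vanishes on a component of $V$, i.e.\ that component is ruled. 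This splits $L$ into lines on ruled components and at most $O(D^2)$ lines on non-ruled components.

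For the non-ruled components, $|L| = O(D^2)$ is small and a direct K\H{o}v\'ari--S\'os--Tur\'an count, using the $\le s$-lines-per-smooth-point bound, handles those incidences. For the ruled components I would use the ruling to project the configuration to a plane curve, where the lines of $L$ become bounded-degree curves and the per-plane multiplicity bound $s$ caps how many pass through a common image point; planar Szemer\'edi--Trotter then delivers the $|P|^{2/3}D^{2/3}s^{1/3}$ term. Finally I would glue the components together with a single polynomial partitioning step applied on the surface: cutting $V$ by a degree-$r$ polynomial yields a partition curve of degree $\le Dr$ dividing $V$ into cells, each meeting few points and few lines, while incidences forced onto the partition curve are controlled by a one-dimension-lower count (a line meets a degree-$Dr$ curve in $\le Dr$ points unless contained in it). Balancing the cell contributions against the boundary and optimizing $r$ produces the leading $|P|^{1/2}|L|^{1/2}D^{1/2}$ term, leaving only $O(|P|+|L|)$.

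The hard part will be the ruled-surface case: on a ruled component there may be $\gg D^2$ lines, so they cannot be bounded crudely, and one must verify that the hypothesis ``at most $s$ lines per plane'' genuinely limits the richness of points after projecting along the ruling, since that projection can collapse distinct lines and manufacture spurious coincidences that have to be disentangled. A secondary difficulty is bookkeeping the degree through the partitioning so that the surface degree $D$, rather than $|L|$, appears in the leading term; this degree substitution is exactly what makes the bound here sharper than the generic estimate of Theorem \ref{Thm: GuthKatz}.
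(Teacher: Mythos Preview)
The paper does not prove this statement at all: Theorem~\ref{Thm: SharirSolomom} is quoted as a black-box result of Sharir and Solomon \cite{SS18} and is then \emph{applied} inside the proof of Theorem~\ref{Thm: GenRichLines} in Appendix~\ref{App: GenRichLines}. There is therefore no ``paper's own proof'' to compare your proposal against.

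What you have written is a reasonable high-level outline of the Sharir--Solomon argument itself (partitioning on the surface, flecnode polynomial to separate ruled from non-ruled components, the tangent-plane observation forcing $\le s$ lines through a smooth point, and Szemer\'edi--Trotter on the ruled pieces). If your intent was to summarise that external proof, the skeleton is broadly right, though the ruled-component step is more delicate than your sketch suggests: one does not literally ``project along the ruling,'' but rather exploits the structure of singly- versus doubly-ruled surfaces and bounds incidences directly on the surface. For the purposes of the present paper, however, none of this is needed---the theorem is simply cited.
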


\begin{proof}[Proof of Theorem \ref{Thm: GenRichLines}]
	
We aim to show that for all $k$ and $t$ we have

\begin{equation}\label{Eq: GenBound}
|L_{k, t}|k^2t^2 \lesssim |L|s^2 \text{~~~~~or~~~~~} |L_{k, t}|kt \lesssim |L|s\log(s).
\end{equation}

\underline{Case 1:} $k$ is $O(1)$.
	
When $k$ is $O(1)$ we want to show $|L_{k,t}|t^2\lesssim |L|s^2$ we use Theorem \ref{Thm: LineLine} which tells us that

\begin{equation}\label{Eq: Case1Bound}
 t|L_{k,t}| \lesssim |L|\sqrt{|L_{k,t}|} + s|L_{k,t}|, 
\end{equation}

where we use that we have at most $s$ lines of $L$ in any plane or regulus. If the first term dominates then we have that

$$ |L_{k,t}|\lesssim \frac{|L|^2}{t^2}, $$

thus $|L_{k, t}|k^2t^2 \lesssim |L|^2k^2 \leq |L|s^2$ as $k=O(1)$ and $|L|^{1/2} \leq s$.

If the second term in (\ref{Eq: Case1Bound}) dominates, then $t\lesssim s$ and thus $|L_{k, t}|k^2t^2 \lesssim |L|s^2$ as $k=O(1)$.
	
This concludes case 1 and we assume that $k=\Omega(1)$.	

\underline{Case 2:} $k$ is $\Omega(1)$.

%
%
%

We let $P_k$ be the points with $k$ lines of $L$ through them. Restrict $P_k$ to those points supported on $L_{k,t}$, we apply Theorem \ref{Thm: GuthKatz}, noting that as $L_{k,t}\subset L$ at most $s$ lines of $L_{k,t}$ can lie in a plane or regulus. This gives us

$$ t|L_{k,t}| \lesssim I(P_k,L_{k,t}) \lesssim |P_k|^{1/2}|L_{k,t}|^{3/4} + s^{1/3}|L_{k,t}|^{1/3}|P_{k,t}|^{1/3} + |P_k|. $$

We brake these into subcases depending on which term dominates.

\underline{Case 2(i):} $t|L_{k,t}| \lesssim |P_k|^{1/2}|L_{k,t}|^{3/4}$.

If  $t|L_{k,t}| \lesssim |P_k|^{1/2}|L_{k,t}|^{3/4}$ then $|L_{k,t}|\lesssim \dfrac{|P_k|^2}{t^4}$ and so,

So (\ref{Eq: GenBound}) becomes

$$ |L_{k,t}|t^2k^2 \lesssim \frac{|P_{k}|^2}{t^2}k^2. $$

To progress we use Corollary \ref{Coro: tRichPoints} which shows that

$$ |P_k| \lesssim \frac{|L|^{3/2}}{k^2} + \frac{|L|s}{k^3} + \frac{|L|}{k} $$

so we have

$$ |L_{k,t}|t^2k^2 \lesssim \frac{|L|^3}{k^2t^2} + \frac{|L|^2s^2}{k^4t^2} + \frac{|L|^2}{t^2} $$

We can assume $kt\geq |L|^{1/2}$ otherwise (\ref{Eq: GenBound}) follows instantly (using $|L|\leq s^2$), so we have that the first of these is bounded by $|L|^2\leq |L|s^2$; the second is bounded by $|L|s^2$ and the last by $|L|^2\leq |L|s^2$.

\underline{Case 2(ii):} $t|L_{k,t}| \lesssim |P_k|$.

Now we have that $|L_{k,t}|\lesssim \dfrac{|P_k|}{t}$ so using Corollary \ref{Coro: tRichPoints} we have that

$$ |L_{k,t}|t^2k^2 \lesssim |P_k|tk^2 \lesssim |L|^{3/2}t + \frac{|L|s}{k}t + |L|kt$$

The third term is not a problem as $kt$ gives the number of lines that cross a fixed line (by definition of $L_{k,t}$) and thus $kt\leq |L|$. If the second term in Corollary \ref{Coro: tRichPoints} dominates we content ourselves with the bound $|L_{k,t}|\lesssim \frac{|L|s}{kt}$. 

So we assume that $|P_k|\lesssim \frac{|L|^{3/2}}{k^2}$ and focus on the first term which is sufficiently controlled if $t \leq s$. So we assume $t \gtrsim s \geq |L|^{1/2}$.

In light of Theorem \ref{Thm: SharirSolomom} we proceed by putting our set $L_{k,t}$ in the zero set $Z$ of a polynomial of degree 

\begin{equation}\label{Eq: DegreeZ1}
 D \lesssim \min\left\{ \sqrt{\frac{|P_k|}{t}}, |L|^{1/2}\right\}. 
\end{equation}

Our restricted $P_k$ thus lies in $Z$. We partition $L$ into $L_0$ and $L_Z$ where the lines in $L_0$ do not lie in the surface $Z$ and the lines $L_Z$ do. As a line in $L_0$ cannot meet $Z$ at more than $D$ points we have

$$ k|P_k| \lesssim I(P_k,L) \lesssim D|L| + I(P_k, L_Z). $$

If $D|L|$ dominates this bound we have

$$ |P_k| \lesssim \frac{|L|D}{k} \lesssim \frac{|L||P_k|^{1/2}}{t^{1/2}k}, $$

using (\ref{Eq: DegreeZ1}). Thus we have that 

$$ t \lesssim \frac{|L|^2}{k^2|P_k|} \lesssim \frac{|L|^2}{k^2t|L_{k,t}|},$$

the latter inequity using $|L_{k,t}|\lesssim \frac{|P_k|}{t}$. Rearranging the above gives $|L_{k,t}|t^2k^2 \lesssim |L|^2 \leq |L|s^2$. So we assume that $|L|D$ does not dominate.

By combining $k|P_k|\lesssim I(P_k,L_Z)$ and $|L|D \leq I(P_k,L_Z)$ we have

$$ |P_k| + |L|D \lesssim I(P_k,L_Z)$$

Again with Theorem \ref{Thm: SharirSolomom} in mind we remove from $Z$ any linear factors. 
%
%
%
%
%

We have to count linear factors differently to Rudnev \cite{R19}. For us to bound intersections in linear factors we first bound the number of lines that can occur in planes. We do this using following variant of Corollary \ref{Coro: tRichPoints} for planes which contain $r$ lines which one obtains through point-plane duality in $\R^3$. We note that if we have at most $s$ lines concurrent the dual has at most $s$ in any plane, one can also show a dual regulus can contain at most $s$ lines of $L$, thus if $\Pi_r$ are planes that contain at most $r$ lines of $L$ we have

$$ |\Pi_r| \lesssim \frac{|L|^{3/2}}{r^2} + \frac{|L|s}{r^3} + \frac{|L|}{r}$$

Each plane with $r$ lines in can contain at most $r^2$ incidences and so we aim to bound $\sum_{r=1}^s r^2|\Pi_r|$.

\begin{align*}
\sum_{r=1}^s r^2|\Pi_{=r}| = \sum_{r=1}^s r|\Pi_{r}| &\lesssim \sum_{r=1}^s \frac{|L|^{3/2}}{r} +   \frac{|L|s}{r^2} + |L| \\
	&= |L|^{3/2}\log(s) + |L|s\lesssim |L|s\log(s).
\end{align*}

Thus we have $|P_k|k\lesssim |L|s\log(s)$ and thus

$$ |L_{k,t}|kt \lesssim |P_k|k \lesssim |L|s\log(s), $$

so $|L_{k,t}|\lesssim |L|s\log(s)(kt)^{-1}$.

WE now assume that the incidence from the the linear factors of $Z$ do not dominate, so we apply Theorem \ref{Thm: SharirSolomom} to find the non-linear (NL) incidences between $L_Z$ and $|P_K|$. We have 

\begin{equation}\label{Eq: Case2iiNearlyThere}
k|P_k| \lesssim |L|D + |P_k| \lesssim I_{NL}(P_k,L_Z) \lesssim |P_k|^{1/2}D^{1/2}|L|^{1/2} + |P_k|^{2/3}D^{2/3}s^{1/3} + |L|.
\end{equation}

We again divide this up into cases where each term dominates. If $|P_k|^{1/2}D^{1/2}|L|^{1/2}$ dominates then we have that

$$ |L|D \lesssim |P_k|^{1/2}D^{1/2}|L|^{1/2}, $$

so $D \lesssim \frac{|P_k|}{|L|}$ then we can see that

$$ k|P_k| \lesssim |P_k|^{1/2}D^{1/2}|L|^{1/2} \lesssim |P_k|, $$

a contradiction as $k=\Omega(1)$.

If $|L|$ dominates in (\ref{Eq: Case2iiNearlyThere}) then $|P_k| \lesssim \frac{|L|}{k}$ and thus

$$ |L_{k,t}|t^2k^2 \lesssim |L|tk \lesssim |L|^2\lesssim |L|s^2. $$ 

If $|P_k|^{2/3}D^{2/3}s^{1/3}$ dominates in  (\ref{Eq: Case2iiNearlyThere}) then we use the inequalities

$$ k|P_k|, |L|D \lesssim |P_k|^{2/3}D^{2/3}s^{1/3} $$

to obtain $\frac{|L|^2k}{s} \lesssim |P_k|$. We then use the refined version of Corollary \ref{Coro: tRichPoints} (we ruled out the other terms of Corollary \ref{Coro: tRichPoints} dominating in the first paragraph of Case 2(ii)) which tells us

\begin{equation*}
 |P_k| \lesssim \frac{|L|^{3/2}}{k^2}.
\end{equation*}

We see that for $\frac{|L|^{3/2}}{k^2}$ to dominate Corollary \ref{Coro: tRichPoints} we have $s \leq k|L|^{1/2}$. The above then gives

$$ \frac{|L|^2k}{s} \lesssim |P_k| \lesssim \frac{|L|^{3/2}}{k^2},$$ 

which gives that $|L|^{1/2}k^3 \lesssim s$ which when we combine with $s \leq k|L|^{1/2}$ gives a contradiction as $k=\Omega(1)$.

This concludes Case 2(ii).

\underline{Case 2(iii):} $t|L_{k,t}| \lesssim s^{1/3}|P_k|^{2/3}|L_{k,t}|^{1/3}$.

Applying Corollary \ref{Coro: tRichPoints} we have

$$ |L_{k,t}| \lesssim \frac{s^{1/2}|P_k|}{t^{3/2}} \lesssim \frac{|L|^{3/2}s^{1/2}}{t^{3/2}k^2} + \frac{|L|s^{3/2}}{t^{3/2}k^3} + \frac{|L|s^{1/2}}{t^{3/2}k}. $$

The last term is not a problem as $t\leq |L|$. For the second term we have

$$ |L_{k,t}|kt \lesssim \frac{|L|s^{3/2}}{(tk)^{1/2}k^{3/2}} \lesssim |L|s, $$

this last using that $kt\geq s$ and thus $\frac{1}{(kt)^{1/2}} \leq \frac{1}{s^{1/2}}$. So we have $|L_{k,t}|\lesssim \frac{|L|s}{kt}$ again.

The first term is only problems if $t\gtrsim s$ so we assume this and repeat the analysis of Case 2(ii), setting up a surface $Z$ of degree

$$D \lesssim \min\left\{\frac{s^{1/4}|P_k|^{1/2}}{t^{3/4}},|L|^{1/2}\right\}.$$

We again have $k|P_k| \lesssim |L|D + I(P_k,L_Z)$, if the first term dominates then

$$ k|P_k| \lesssim |L|D \lesssim \frac{|L|s^{1/4}|P_k|^{1/2}}{t^{3/2}k^2}, $$

it follows that $|P_k|\lesssim \frac{|L|^2s^{1/2}}{t^{3/2}k^2}$. We plug this into  $t|L_{k,t}| \lesssim s^{1/3}|P_k|^{2/3}|L_{k,t}|^{1/3}$ to obtain

\begin{align*}
t|L_{k,t}| &\lesssim \frac{s^{2/3}|L|^{4/3}}{tk^{4/3}}|L_{k,t}|^{1/3}\\
t^{3/2}|L_{k,t}| &\lesssim \frac{s|L|^2}{t^{3/2}k^2}\\
|L_{k,t}|t^2k^2 &\lesssim \frac{s|L|^2}{t} \lesssim \frac{|L|s^3}{t} \lesssim |L|s^2
\end{align*}

the final inequality following from $t \gtrsim s$.

We again have to deal with the linear factors of $Z$, but these lead to $|L_{k,t}|\lesssim |L|s\log(s)(kt)^{-1}$ as in Case 2(ii). Indeed, we have $k|P_k|\lesssim |L|s\log(s)$ and plugging this into $|L_{k,t}| \lesssim \frac{s^{1/2}|P_k|}{t^{3/2}}$ gives

$$ |L_{k,t}|kt \lesssim \frac{|L|s^{3/2}\log(s)}{t^{1/2}}\lesssim |L|s\log(s), $$

again as $t\gtrsim s$.

We then obtain (\ref{Eq: Case2iiNearlyThere}) again. The first two terms are controlled in exactly the same way as in Case 2(ii). The final term means that $k|P_k| \lesssim |L|$ plugging this into  $t|L_{k,t}| \lesssim s^{1/3}|P_k|^{2/3}|L_{k,t}|^{1/3}$ gives

$$ |L_{k,t}|k^2t^2 \lesssim \frac{|L|s^{1/2}}{t^{1/2}}(kt) \lesssim |L|^2 \lesssim |L|s^2, $$

using that $kt\leq |L|$ and $t\gtrsim s$.
\end{proof}

\end{document}